\def\R{\mathrm{I\kern-0.21emR}}
\def\N{\mathrm{I\kern-0.21emN}}
\newcommand{\Z}{{\mathbb{Z}}}
\newcommand{\D}{{\mathbb{D}}}
\newcommand{\sphere}{{\mathbb{S}}}
\newcommand{\ha}{{\frac{1}{2}}}
\renewcommand{\geq}{\geqslant}
\renewcommand{\leq}{\leqslant}
\newtheorem{definition}{Definition}[section]
\newtheorem{remark}{Remark}
    \newtheorem{theorem}{Theorem}[section]
    \newtheorem{lemma}{Lemma}[section]
    \newtheorem{proposition}{Proposition}[section]
    \newtheorem{corollary}{Corollary}[section]
    \let\eps\varepsilon
    \let\epsilon\varepsilon
    \let\oldsum\sum
    \renewcommand{\sum}{\displaystyle\oldsum}
    \let\oldprod\prod
    \renewcommand{\prod}{\displaystyle\oldprod}
    \let\oldinf\inf
    \renewcommand{\inf}{\displaystyle\oldinf}
    \let\oldsup\sup
    \renewcommand{\sup}{\displaystyle\oldsup}
    \let\leq\leqslant
    \let\geq\geqslant
    \newlength{\oldparindent}
    \newcommand{\myindent}{\hspace{\oldparindent}}
    \title{On the $L^\infty $ norms of spectral projectors on shrinking intervals: the cases of some spheres of revolution and of the Euclidean disk }
    \author{Ambre Chabert\footnote{Département de Mathématiques et Applications, Ecole Normale Supérieure,
        UMR 8553, 45 rue d'Ulm. 75230 Paris Cedex 05, France. Email adress : ambre.chabert@ens.fr}~ \&  Yves  Colin de Verdière\footnote{
    Institut Fourier, Université Grenoble-Alpes,
          UMR 5582,
      BP 74, 38402-Saint Martin d'Hères Cedex, France.
    Email adress : yves.colin-de-verdiere@univ-grenoble-alpes.fr}}
\begin{document}
    \maketitle
    \begin{abstract}
        Given a compact Riemannian surface $M$, with Laplace-Beltrami operator $\Delta$, for $\lambda > 0$, let $P_{\lambda,\lambda^{-\frac{1}{3}}}$ be the spectral projector on the bandwidth $[\lambda-\lambda^{-\frac{1}{3}}, \lambda + \lambda^{\frac{1}{3}}]$ associated to $\sqrt{-\Delta}$. We prove a polynomial improvement on the $L^2 \to L^{\infty}$ norm of $P_{\lambda,\lambda^{-\frac{1}{3}}}$ for generic simple spheres of revolution (away from the poles and the equator) and for the Euclidean disk away from its center, but up to the boundary. We use the Quantum Integrability of those surfaces to express the norm in terms of a joint basis of eigenfunctions for $\left(\sqrt{-\Delta}, \frac{1}{i}\frac{\partial}{\partial \theta}\right)$. Then, we use that those eigenfunctions are asymptotically Lagrangian oscillatory functions, each supported on a Lagrangian torus with fold-type caustic. Thus, studying the distribution of the caustics, and using BKW decay away from the caustics, we are able to reduce the problem to counting estimates.
    \end{abstract}
    \section{Introduction}
    
    \myindent Let $(M,g)$ be a smooth compact Riemannian surface, with Laplace-Beltrami operator $\Delta$. If $\partial M$ is non empty, we fix Dirichlet or Neumann boundary conditions. We introduce, in the spirit of Sogge (see \cite{sogge1988concerning}) the associated spectral projectors on thin frequency intervals.

    \begin{definition}\label{defspecproj}
        Let $\lambda,\delta > 0$, we define through functional calculus
        \[
            P_{\lambda,\delta} := 1_{[\lambda - \delta, \lambda + \delta]}(\sqrt{-\Delta}),
        \]
        where $1_{[\lambda - \delta, \lambda + \delta]}$ is the indicator function of $[\lambda - \delta, \lambda + \delta]$
    \end{definition}

    \myindent The question of estimating the operator norm of $P_{\lambda,\delta}$, seen as an operator from $L^2(M)$ to $L^{\infty}(M)$,
    in relation with the geometry of $M$, has become in the last decades an active problem in Spectral Geometry, see the review \cite{germain2023l2}. In particular, observe that any bound on $\|P_{\lambda,\delta}\|_{L^2\to L^{\infty}}$ implies a corresponding bound on the $L^{\infty}$ norm of eigenfuntions compared to the general upper bound (see below). Indeed, if $\phi_\lambda$ is an $L^2$ normalized eigenfunction of $\Delta$, i.e.
    \[\Delta \phi_\lambda = - \lambda^2 \phi_\lambda,\]
    we have, for any compact $K\subset M$, $\|\phi_\lambda\|_{L^{\infty}} \leq \|P_{\lambda,\delta}\|_{L^2(M) \to L^{\infty}(K)}$.
    
    \myindent In this article,
    we study the spectral projectors on polynomially thin frequency intervals for some examples  of \textit{Quantum  integrable} surfaces as presented in Section \ref{sec:qi}. Our two main models are the generic simple spheres of revolution, and the Euclidean disk.
    \subsection{Results for the two model cases : the spheres of revolution and the Euclidean disk}
    
    \begin{definition}\label{def:si} 
      A metric of revolution on the 2-sphere $\sphere$ is said to be {\rm simple} if the metric is smooth and writes outside the poles
      $g= a(s)^2 d \theta ^2 + ds ^2  $ with $a: [0, L] \rightarrow  [0,\infty  [ $ a smooth function vanishing at the boundaries and
          with only one critical point which is a non degenerate maximum.
\end{definition}

    \myindent For a \textit{generic} simple metric of revolution on the sphere, it was proved in \cite{de1980spectre} that, \textit{generically}, the remainder of the Weyl counting function is better than in the general case, i.e. if $0\leq \lambda_1^2 \leq \lambda_2^2 ...$ are the eigenfunctions of $-\Delta$, and $N(\lambda) = \# \{j\geq 1: \ \lambda_j \leq \lambda\}$, then 
\begin{equation}\label{generic}
    N(\lambda) = c\lambda^2 + O(\lambda^{2/3}).
\end{equation}

\myindent In the following, we will simply call a sphere of revolution \textit{generic} if it satisfies estimate \eqref{generic}. We then prove the following theorem on generic simple spheres of revolution away from the poles and the equator.

    \begin{theorem}\label{main}\footnote{
         We recall the  notations 
        \[ 
        A\lesssim_{a,b,c,...} B
        \]
        if and only if there exists a constant $C$, which may depend on the indices $a,b,c,...$, such that
        \[
            A \leq C B.
        \]
        and $A \simeq _{a,b,c,...} B$ if and only if $A \lesssim_{a,b,c,...} B$ and  $B \lesssim_{a,b,c,...} A$.
    - }
      Assume that $g$ is a simple (definition \ref{def:si})  and generic (equation \eqref{generic}) metric of revolution on $\sphere$.
      Let $U:=\sphere \setminus \left( {\rm poles}\cup {\rm equator}\right)$ and 
             $K\subset U$ be compact. There holds
            \begin{equation} \label{specprojupbdd}
            \|P_{\lambda,\lambda^{-\frac{1}{3}}}\|_{L^2(\sphere) \to L^{\infty}(K)} \lesssim_{K} \lambda^{\frac{1}{2} - \frac{1}{12}} \qquad \forall \lambda > 0.
            \end{equation}
    \end{theorem}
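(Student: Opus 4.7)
The plan is to leverage the quantum integrability of simple metrics of revolution to diagonalize the spectral projector on a joint basis of $(\sqrt{-\Delta}, -i\partial_\theta)$, and then combine the BKW description of joint eigenfunctions with a lattice counting argument for the joint spectrum inside the thin window $[\lambda - \lambda^{-1/3}, \lambda + \lambda^{-1/3}]$. Let $(\phi_{m,n})_{m \in \Z,\, n \in \N}$ be a joint orthonormal basis of $L^2(\sphere)$ satisfying $P_1 \phi_{m,n} = \lambda_{m,n} \phi_{m,n}$ and $P_2 \phi_{m,n} = m \phi_{m,n}$. The $\theta$-symmetry gives $\phi_{m,n}(s, \theta) = e^{im\theta} f_{m,n}(s)/\sqrt{2\pi}$, where $f_{m,n}$ is the $n$-th eigenfunction of the reduced 1D Sturm--Liouville problem on $[0, L]$ with effective potential $m^2/a(s)^2$ and eigenvalue $\lambda_{m,n}^2$ (normalized in $L^2([0,L], a\,ds)$). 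A standard orthogonality-plus-Cauchy--Schwarz argument reduces the theorem to a pointwise estimate on the local spectral function
\[G(x) := \sum_{(m,n):\, |\lambda_{m,n} - \lambda| \lesssim \lambda^{-1/3}} |\phi_{m,n}(x)|^2,\]
namely $G(x) \lesssim_K \lambda^{5/6}$ uniformly in $x \in K$.

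Next I would derive pointwise bounds on $|\phi_{m,n}(x)|$ from the fold-type Lagrangian geometry. The torus $\{p_1 = \lambda_{m,n},\, p_2 = m\}$ projects on $M$ with fold caustic along $\{s : a(s) = |m|/\lambda_{m,n}\}$, and Airy/BKW matching combined with $L^2$ normalization yields, for $x = (s_0, \theta_0)$ in the classically allowed region,
\[|\phi_{m,n}(x)|^2 \lesssim_K \min\!\left(\lambda^{1/3},\ |s_0 - s_{*,m,n}|^{-1/2}\right),\]
while in the forbidden region $\{s : a(s) < |m|/\lambda_{m,n}\}$ the contribution is $O(\lambda^{-\infty})$. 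The $\lambda^{1/3}$ bound is the maximal Airy peak, attained when the caustic is within the Airy window of width $\lambda^{-2/3}$ of $x$; outside this window the amplitude decays like the inverse square root of the distance to the caustic, and this is precisely the mechanism by which most eigenfunctions in the spectral window fail to concentrate at $x$.

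The heart of the argument is a counting estimate. For $s_0$ fixed, the caustic of $\Lambda_{m,n}$ lies within distance $h$ of $s_0$ iff $|m|/\lambda_{m,n}$ belongs to an interval of width $\sim h$ around $a(s_0)$, confining $(\lambda_{m,n}, m)$ to a thin strip in the $(\lambda, m)$-plane. The genericity condition of Definition \ref{def:gen} (non-degenerate inflection points on the level curves $\{q_1 = c\}$) is precisely what excludes resonant clustering of the joint spectrum, and a van der Corput-type lattice argument on these curves yields a sharp count for the number of $(m, n)$ in the intersection of the spectral window with the caustic strip. Splitting the sum defining $G(x)$ dyadically in the distance-to-caustic scale $h_k = 2^{-k}$ ranging from the Airy scale $\lambda^{-2/3}$ up to $O(1)$, combining this count with the pointwise bound above, and summing the resulting geometric series, produces the desired $G(x) \lesssim \lambda^{5/6}$.

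The main obstacle is the lattice count. Without genericity, joint eigenvalues can cluster catastrophically: on the round sphere, every $\lambda_l = \sqrt{l(l+1)}$ supports $2l+1$ joint eigenfunctions, so a spectral window of width $\lambda^{-1/3}$ may contain $O(\lambda)$ modes and no polynomial improvement over Sogge's $\lambda^{1/2}$ is possible. The non-degenerate inflection hypothesis is exactly the curvature/convexity that enables sharp lattice-point counts for the joint spectrum on the level curves of $q_1$, and extracting the specific $\lambda^{1/12}$ gain is the real technical heart of the proof. By contrast, the BKW pointwise bounds and the dyadic reassembly, while delicate, are relatively standard in the quasi-modes literature.
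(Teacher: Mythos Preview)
Your outline matches the paper's proof closely: reduce to the local spectral function $G(x)=\sum_{|\lambda_{m,n}-\lambda|\lesssim\lambda^{-1/3}}|\phi_{m,n}(x)|^2$, apply the fold-caustic Airy bound $|\phi_{m,n}(x)|^2\lesssim\lambda^{1/3}\langle\lambda^{2/3}d(x,\mathcal{C}_{m,n})\rangle^{-1/2}$, and then turn the sum into a counting problem for the joint spectrum. The paper sums directly rather than dyadically, but that is cosmetic.

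One point deserves sharpening, because you attribute the whole anti-clustering mechanism to genericity and a van der Corput argument. In the paper these are two \emph{separate} ingredients playing different roles. First, the Bohr--Sommerfeld description of the joint spectrum as a deformed lattice (valid for any simple metric of revolution, generic or not) gives a gap lemma: distinct joint eigenvalues in the window $|\lambda_{k,n}-\lambda|\lesssim\lambda^{-1/3}$ automatically have distinct values of $n$, so the ratios $\mu_l=n_l/\lambda_l$ and hence the caustics $\{a(s)=\mu_l\}$ are separated by $\gtrsim\lambda^{-1}$. This is the separation input your dyadic sum needs, and it is not a van der Corput statement. Second, genericity (non-degenerate inflections on $\{q_1=1\}$) is invoked only to bound the \emph{total} number of terms in the window by $O(\lambda^{2/3})$, via the improved lattice-point remainder for curves with non-vanishing curvature-or-torsion. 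With both inputs the sum is $\sum_{l=0}^{O(\lambda^{2/3})}\lambda^{1/3}\langle l\lambda^{-1/3}\rangle^{-1/2}\lesssim\lambda^{5/6}$, which is exactly what you want. Your sketch would benefit from isolating the gap lemma explicitly rather than folding it into the van der Corput step.
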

    
    \myindent The other main example of Quantum Integrable surface which we study is the \textit{Euclidean disk} $\mathbb{D}^2 \subset \R^2$, i.e. the closed disk of radius $1$ centered at the origin $O$. It was proved that the disk satisfies the remainder estimate \eqref{generic} in \cite{YCdV-disque,kuznetsov1965asymptotic} In this context, the knowledge of an explicit basis of eigenfunctions in terms of Bessel functions yields that we may prove a similar theorem, and even go up to $\partial \mathbb{D}^2$ in the estimate.
    
    \begin{theorem}\label{main2}
      \begin{itemize}
        \item
            If $K\subset {\rm Int}(\D)\setminus O$ is compact, there holds, for $\lambda \rightarrow + \infty $,
            \begin{equation} \label{specprojupbdd2}
            \|P_{\lambda,\lambda^{-\frac{1}{3}}}\|_{L^2(M) \to L^{\infty}(K)} \lesssim_{K} \lambda^{\frac{1}{2} - \frac{1}{12}}.
            \end{equation}

            \item If $K$ is a compact set with $K\subset \D \setminus O$ ($K$ possibly intersects the boundary), there holds, for $\lambda \rightarrow + \infty $,
            \begin{equation}\label{specprojupbdd3}
            \|P_{\lambda,\lambda^{-\frac{1}{3}}}\|_{L^2(M) \to L^{\infty}(K)} \lesssim_{K} \lambda^{\frac{1}{2} - \frac{1}{18}} .
            \end{equation}

            \end{itemize}
    \end{theorem}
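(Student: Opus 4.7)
The plan is to parallel the strategy of Theorem~\ref{main}, exploiting that the disk is quantum integrable via $P_2 = -i\partial_\theta$. An $L^2$-orthonormal joint basis for $(\sqrt{-\Delta}, P_2)$ is
\[
\psi_{m,n}(r,\theta) \;=\; c_{m,n}\, J_{|m|}\!\bigl(\lambda_{|m|,n}\,r\bigr)\,e^{im\theta},\qquad m\in\Z,\; n\geq 1,
\]
where $\lambda_{|m|,n}$ is the $n$-th positive zero of $J_{|m|}$ (Dirichlet) or of $J_{|m|}'$ (Neumann) and $c_{m,n}$ is the $L^2$-normalization. The standard $T T^*$ argument gives
\[
\bigl\|P_{\lambda,\lambda^{-1/3}}\bigr\|_{L^2\to L^\infty(K)}^2 \;=\; \sup_{x\in K} S(x),\qquad S(x)\;:=\;\sum_{(m,n)}\phi\!\bigl(\lambda^{1/3}(\lambda_{|m|,n}-\lambda)\bigr)^2\,|\psi_{m,n}(x)|^2,
\]
so the problem reduces to bounding $S$ uniformly on $K$.

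Next I would control each $|\psi_{m,n}(x)|^2$ via classical Bessel asymptotics, splitting at $x=(r,\theta)$ according to the turning point $r_c := |m|/\lambda_{|m|,n}$: in the \emph{classical} (Debye/WKB) regime $r > r_c + C\lambda^{-2/3}$,
\[
|\psi_{m,n}(x)|^2 \;\lesssim\; \frac{\lambda^2}{\sqrt{(\lambda^2-m^2)(\lambda^2 r^2 - m^2)}};
\]
in the \emph{Airy} fold-caustic regime $|r-r_c|\lesssim \lambda^{-2/3}$ one gets $|\psi_{m,n}(x)|^2 \lesssim \lambda^{1/3}/\sqrt{1-(m/\lambda)^2}$; and in the \emph{evanescent} regime $r<r_c-C\lambda^{-2/3}$, $|\psi_{m,n}|^2$ decays exponentially (BKW). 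On the counting side, Bohr--Sommerfeld (equivalently, the spacing of Bessel zeros) gives a density of active $m$ in the window of width $2\lambda^{-1/3}$ equal to $\simeq \lambda^{-1/3}\sqrt{1-(m/\lambda)^2}$ per unit $m$, integrating to the Weyl total $\sim\lambda^{2/3}$.

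For $K\subset \mathrm{Int}(\D)\setminus\{O\}$ compact, I integrate the classical pointwise bound against this density; the Airy and evanescent regimes contribute negligibly, and the classical bulk yields $S(x)\lesssim_K \lambda^{5/6}$, which gives~\eqref{specprojupbdd2}. For the boundary case, the new ingredient is the \emph{whispering-gallery} regime $m/\lambda\to 1$: these modes have caustic $r_c$ in the thin annulus $[1-C\lambda^{-2/3},1]$ and an enhanced normalization $c_{m,n}^2\sim \lambda^{4/3}$ (coming from Airy asymptotics of $J_m'$ near $z=m$), so at a caustic point they reach $|\psi_{m,n}(x)|^2\sim\lambda^{2/3}$. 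Handling them separately via the Airy formula at the double caustic $r=r_c\approx 1$, with the refined density $\lambda^{-1/3}\sqrt{1-(m/\lambda)^2}\sim\lambda^{-2/3}$ at $m\approx\lambda$, produces $S(x)\lesssim_K\lambda^{8/9}$ uniformly on $K$, whence~\eqref{specprojupbdd3}.

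The main obstacle is the last step. In the annulus $[1-C\lambda^{-2/3},1]$ the ordinary fold caustic at $r=r_c$ superposes with the reflection (boundary) caustic at $r=1$, so the radial projection of the Lagrangian torus $\{p_1=\lambda_{|m|,n},\; p_2=m\}$ fails to be of ordinary fold type there. This precludes the same $\lambda^{-1/12}$ gain as in the interior, and a delicate matched-asymptotic analysis of the Airy profile against the boundary is needed in order to extract the weaker $\lambda^{-1/18}$ improvement in~\eqref{specprojupbdd3}.
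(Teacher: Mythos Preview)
Your interior argument has a real gap: ``integrating the classical pointwise bound against the density $\lambda^{-1/3}\sqrt{1-(m/\lambda)^2}$'' is not a rigorous upper bound, because that density is only an \emph{average} occupancy. In fact, if you carry out the integral you wrote, the factor $\sqrt{1-(m/\lambda)^2}$ in the density cancels the one in the denominator and you obtain $\lambda^{2/3}$, not $\lambda^{5/6}$ --- so your own computation does not match your claim. What the paper actually uses is the \emph{deterministic spacing of caustics}: for each integer $m$ there is at most one active $n$ (since consecutive Bessel zeros are $\gtrsim 1$ apart), and the caustic of the active mode sits at $r=m/\lambda_{m,n}\approx m/\lambda$; hence distinct active caustics are separated by at least $\lambda^{-1}$. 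One then plugs the Airy/fold bound $|\psi_{m,n}(x)|^2\lesssim \lambda^{1/3}\langle \lambda^{2/3}\,d(x,\mathcal C_{m,n})\rangle^{-1/2}$ and sums $\sum_{l=0}^{O(\lambda^{2/3})}\lambda^{1/3}\langle l\lambda^{-1/3}\rangle^{-1/2}\lesssim\lambda^{5/6}$. No density, no averaging.

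For the boundary case the gap is the one you flag, but your diagnosis (``double caustic'', failure of fold type) is off. The caustic at $r=\mu$ remains an ordinary fold all the way to $\mu\to 1$; what goes wrong is only that the normalizing constant $c_{m,n}^2\sim \lambda(1-\mu)^{-1/2}$ blows up. The paper's key new input is an \emph{improved} caustic spacing near the boundary: using the Olver expansion $\lambda_{m,k}=m\,p_0(a_k m^{-2/3})+O(m^{-1})$ (with $-a_k$ the Airy zeros), one shows that two active modes with caustics in $[1-2\varepsilon,1-\tfrac12\varepsilon]$ must have $|n_1-n_2|\gtrsim n_1^{1/3}|a_{k_1}-a_{k_2}|$, whence their caustics differ by $\gtrsim \lambda^{-1}\varepsilon^{-1/2}$ rather than merely $\lambda^{-1}$. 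This sparsification exactly compensates the blow-up of $(1-\mu)^{-1/2}$: splitting according to $r-\mu_j$ (evanescent / near-caustic / classical) and summing with the improved spacing gives $S(x)\lesssim\lambda^{8/9}$ uniformly up to the boundary. Your ``refined density $\lambda^{-2/3}$ at $m\approx\lambda$'' is the heuristic shadow of this, but the actual argument is a pointwise spacing lemma for Bessel zeros near the turning point, not a density.
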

    
    \subsection{Quantum integrability of geodesic flows.}\label{sec:qi} 
    
    \myindent The two models which we introduced are some of the main examples of the more general class of \textit{Quantum integrable surfaces}. Let $(M,g)$ be a compact  Riemannian surface. Let $\Delta $ be the Laplace-Beltrami operator, let $g^\star $  be the dual metric,
    which turns out to be the principal symbol of $-\Delta $, and let $P_1=\sqrt{-\Delta }$, whose principal symbol is
    $p_1=\sqrt{g^\star}$.
    \begin{definition}
    We say that the Laplace-Beltrami operator $\Delta $ is {\rm Quantum integrable} (QI) if there exists a self-adjoint pseudo-differential operator (a $\Psi$DO)
    $P_2$ on $M$ of degree $1$ commuting with $\Delta $ such that the differential $dp_2 $ of the principal symbol $p_2$ of $P_2$  is linearly independent of
    $dp_1 $ on an open conic dense set of $T^\star M \setminus 0$.
\end{definition}
    \myindent This situation is discussed in \cite{YCdV-sp-joint-II}, and many examples of QI systems are given in the more recent \cite{Bam25}. For surfaces of revolution, as well as for the Euclidean disk, the operator $P_2 := \frac{1}{i}\frac{\partial}{\partial \theta}$ commutes with the Laplace-Beltrami operator, and hence with $P_1 = \sqrt{-\Delta}$.
    
    \myindent Now, observe that being QI is stronger than the classical integrability of the geodesic flow:
    if $\Delta $ is QI,  the Hamiltonian $p_1$ of the geodesic flow is indeed classically integrable
    with first integral $p_2$. In particular, the standard theory of completely integrable homogeneous hamiltonian flows yields that the phase space $T^*M \backslash 0$ is "foliated" by level sets of $p := (p_1,p_2)$ which are tori of dimension $1$ or $2$ or cylinders. For simple spheres of revolution, and for the Euclidean disk, this foliation has a simple structure. Indeed, in those cases, there is a codimension $2$ subset of $T^*M\backslash 0$ on which the level sets of $p$ form a smooth foliation by lagrangian tori, which projects "nicely" onto $M$ in the sense of the following definitions.
    
\begin{definition}
    Let $L \subset T^*M\backslash 0$ be a compact lagrangian submanifold, let $\Pi : T^*M \to M$ be the canonical projection, and let $\pi : L\to M$ be its restriction to $L$. Following \cite{Gu-Sc}, we say that $p\in L$ is singular if $d\pi(p)$ has a nonzero kernel, and we define $S\subset L$ the set of singular point. We recall that the {\rm caustic set} of $L$ is defined as $\pi(S)$. 
    
   \myindent Assume that $S$ is a submanifold of $L$ of dimension $1$. We say that $p\in S$ is of {\rm fold-type} if $ker(d\pi(p))$ and $T_p S$ span together $T_p L$. 
\end{definition}

\myindent We then define the subsets of $M$ over which the phase-space is foliated by lagrangian tori with fold-type singularities.
    
 \begin{definition}\label{def:regdom}
    The {\rm regular phase space  domain} is the  conic open  subset $\Omega $ of  $T^\star M \setminus 0$  where the  ``foliation'' defined by the level sets of
    the joint symbol $p=(p_1,p_2 )$ is a foliation by smooth Lagrangian tori.

    \myindent Let $\Pi $ be the canonical projection of $T^\star M $ onto $M$. A {\rm regular domain} $U$  in $M$ is an open set $U$ so that $\Pi^{-1}(U) $ is contained in the  regular phase space domain.

   \myindent  A regular  domain $U$  is said to be of {\rm  fold type} if the singularities of the Lagrangian tori are of fold type over $U$.
\end{definition}
\begin{remark}
      Note that the definition of a  regular domain forbids the existence of an unstable hyperbolic closed geodesic $\gamma $ in $M$.
      Indeed, in the later case, the stable and
      unstable manifolds of $\gamma $ project onto the full manifold.
\end{remark}

    \myindent For $f$ a \textit{simple} metric of revolution on $\sphere$, the regular phase space domain is 
    \[\Omega =\{ (s,\theta;\sigma, p_2 )~|~|p_2 |< A p_1 \}\] with $p_1= \sqrt{\sigma^2+ \frac{p_2 ^2}{a(s)^2 }}$
and  $A= 1/ \sup_{s\in [0,L]} a(s) $. The Lagrangian tori are given by $L_{u,v} = p^{-1}\{(u,v)\}$, for  $(u,v) \in \R^2$ fixed such that $0 < |v| < Au$.

\myindent The open set $U:=M\setminus ~({\rm poles}~\cup ~{\rm equator}) $ is a maximal regular domain of fold type: 
the Lagrangian manifolds $(p_1=\mu, p_2=0)$ admit the poles as an unique caustic point which is indeed not of fold type; moreover   
the Lagrangian foliation is singular at the equatorial geodesic. We will keep this $U$ in what follows concerning the spheres of revolution. 

\myindent Similarly, for $\mathbb{D}^2$, the same formalism applies, with maximal regular domain of fold-type given by $U:= \mathbb{D}^2 \backslash (\{O\} \cup \partial\mathbb{D}^2)$.

\quad

\myindent Now, for more general QI surfaces, there is usually no regular domain, since there typically is at least one hyperbolic geodesic in $M$. However, the analysis may still be developed, after microlocalizing on that part of the regular phase space which is foliated by Lagrangian tori of fold type. We enunciate the result in this case as a remark, since, for the sake of concreteness, we will not give a full proof of it in the present article, however we will indicate which points needs to be changed in order to prove this result.

\begin{remark}\label{rem:gen}
    Let $(M,g)$ be a Quantum Integrable surface, and let $\tilde{\Omega} \subset \subset \Omega$ be an open subset of the regular phase space domain which is foliated by smooth Lagrangian tori of fold-type. Let $\Psi(x,D)$ be a pseudodifferential operator of order zero whose wavefront set is supported in $\tilde{\Omega}$. Then, provided the spectrum of $(M,g)$ satisfies the remainder estimate \eqref{generic}, there holds
    \[\left\| P_{\lambda,\lambda^{-1/3}} \Psi(x,D)\right\|_{L^2(M)\to L^{\infty}(M)} \lesssim_{\Psi} \lambda^{1/2 - 1/12}.\]
\end{remark}

    
    \subsection{Earlier results}

     \myindent In the seminal paper \cite{hormander1968spectral}, Hörmander proved, using microlocal analysis, that, for a general Riemannian compact manifold $M$ of dimension $d$, and for any $L^2$ normalized eigenfunction $\phi_{\lambda}$ of the Laplacian (i.e. $-\Delta \phi_{\lambda} = -\lambda^2 \phi_{\lambda}$), there holds the following polynomial upper bound on its $L^{\infty}$ norm
    \begin{equation}\label{hormbd}
        \|\phi_{\lambda}\|_{L^{\infty}(M)} \lesssim_{M} \lambda^{\frac{d-1}{2}}.
    \end{equation}

     \myindent While this upper bound is sharp in general, since it is saturated by zonal spherical harmonics on the $d$-dimensional sphere $S^d$, it is expected that, generically, one can prove much better bounds. Indeed, it was proved in \cite{sogge2002riemannian} that, for a \textit{generic} $M$, there holds
    \[
        \|\phi_{\lambda}\|_{L^{\infty}(M)} = o\left(\lambda^{\frac{d-1}{2}}\right).
    \]

     \myindent With additional assumption on the geometry of $M$, this upper bound can be \textit{quantitatively} improved. 

     \myindent For the regular flat torus $\mathbb{T}^d$, $d > 2$, Bourgain conjectured in \cite{bourgain1993eigenfunction} that there holds
    \[
        \|\phi_{\lambda}\|_{L^p(\mathbb{T}^d)} \lesssim \lambda^{\frac{d}{2} - 1 - \frac{d}{p}} \qquad p > \frac{2d}{d-2}.
    \]

     \myindent Some cases of this conjecture were proved in \cite{bourgain2013moment}, culminating with the $l^2$ decoupling theorem, see \cite{bourgain2015proof}. More generally, estimates on spectral projectors on thin frequency intervals for tori have been extensively studied in \cite{germain2022boundsBook,germain2022bounds,demeter2024l2,hickman2020uniform}.

     \myindent For \textit{arithmetic surfaces}, it is conjectured in \cite{iwaniec1995norms} that there holds $\|\phi_{\lambda}\|_{L^{\infty}}\lesssim \lambda^{\eps}$ for any $\eps > 0$. For recent progresses on that conjecture, see \cite{buttcane2017fourth,humphries2018equidistribution,humphries2022p}.

     \myindent More generally, for manifolds with \textit{nonpositive curvature}, many works have obtained \textit{logarithmic} improvements on the norm of eigenfunctions, see \cite{berard1977wave,hassell2015improvement,hezari2016lp,blair2017refined,blair2018concerning,blair2019logarithmic}.

     \myindent The converse question of finding geometric assumption on $M$ under which there exists sequences of eigenfunctions with high $L^{\infty}$ (or $L^p$) norms has been studied in many works, such as \cite{sogge2002riemannian,sogge2011blowup,sogge2016focal,sogge2016focalb,sogge2001riemannian,canzani2019growth,canzani2021eigenfunction}.
     
     \myindent We mention the setting of the \textit{magnetic Laplacian} on compact hyperbolic surface, for which one observes three of the main behaviors, depending on energy levels \cite{Chabert26}: for low energy levels, the situation is similar to the sphere, that is Hörmander's bound is saturated. For the critical energy level, a polynomial improvement holds. Finally, for high energy levels, the situation is similar to manifolds with nonpositive curvature.

    \quad

    \myindent In the context of \textit{completely integrable} manifolds, it is expected that, \textit{generically}, there should be polynomial improvements on the upper bound \eqref{hormbd}. Indeed, Bourgain claimed in \cite{bourgain1993eigenfunctiona} that, for a generic subset $U$ of a generic completely integrable manifold $M$, there holds
    \[
     \|\phi_{\lambda}\|_{L^{\infty}(U)} \lesssim \lambda^{\frac{d-1}{2}-
                                    \eps},
    \]
    where $\eps > 0$ depends on $M$. The converse question of finding sequences of eigenfunctions with high $L^p$ norms has been studied in \cite{toth2003lp,toth2003norms,toth2002riemannian}.

     \myindent For \textit{quantum completely integrable} manifolds (see
    \cite{eswarathasan2024pointwise} for a definition), introduced by one
    of the authors in \cite{de1980spectre}, the other author conjectured
    in \cite{chabert2025bounds} a quantitative approach to Bourgain's
    conjecture. Since the latest paper is closely related to this one, we
    briefly discuss its results compared to ours.

     \myindent In \cite{chabert2025bounds}, the first author studied a generic simple surface of revolution $\mathcal{S}$ with an additional convexity assumption. In particular, her analysis doesn't apply to the Euclidean disk. She proved that, if $K$ is any compact subset of $\mathcal{S}$ which doesn't contain a pole, then there holds, with the Definition \ref{defspecproj},
    \begin{equation}\label{exactest}
        \|P_{\lambda,\delta}\|_{L^2(\mathcal{S}) \to L^{\infty}(K)} \simeq_{\mathcal{S},K} \lambda^{\frac{1}{2}}\delta^{\frac{1}{2}} \qquad \forall \delta > \lambda^{-\frac{1}{32}}.
    \end{equation}

     \myindent In particular, this yields a $\lambda^{\frac{1}{2} - \frac{1}{64}}$ upper bound on the $L^{\infty}$ norm of eigenfunctions away from the poles. Hence, in this paper, we improve this bound to a $\lambda^{\frac{1}{2} - \frac{1}{12}}$ upper bound away from the poles \textit{and} the equator, under less restrictive assumptions. However, we conjecture that the upper bound \eqref{specprojupbdd} on the spectral projector is \textit{not} optimal, and that, rather, the identity \eqref{exactest} holds up to $\delta = \lambda^{-\frac{1}{3}}$, see \cite{chabert2025bounds}[Conjecture 8.1]. We insist that, due to this non optimality, the upper bound \eqref{specprojupbdd} \textit{doesn't} imply the upper bound in \eqref{exactest}. Hence, in the context of spheres of revolution, while this paper yields a more direct approach to improved upper bounds on eigenfunctions, the approach of the aforementioned paper is probably more suited in order to study spectral projectors on thin frequency intervals. 
    
    \subsection{Ingredients and ideas for  the proof}
    
    \myindent The first part of the proof is to decompose on the basis of joint eigenfunctions of $P_1 = \sqrt{-\Delta}$ and $P_2 = -i\frac{\partial}{\partial \theta}$. This basis is very well controlled since
    
    \begin{enumerate}
        \item The joint eigenfunctions "live" in the regular phase space domain $\Omega$, and they are Lagrangian oscillatory functions (in the sense of \cite{duistermaat1974oscillatory}, see Appendix \ref{app:lagr}). This is explicit for the disk from the expression of Bessel functions.
        
        \item The fold singularity of Lagrangian tori implies that the eigenfunctions are well-approximated in terms of Airy functions (see \ref{sec:caustics}), which in turn yields a BKW decay away from the caustics (see Proposition \ref{airyestimate}).
        
        \item The caustic sets associated to the Lagrangian foliation defines a foliation of $U$ (section \ref{sec:fol})
        
        \item The joint spectrum\footnote{For the general case mentioned in Remark \ref{rem:gen}, the joint spectrum is well expressed with the notion of \textit{deformed lattice},see \cite{de1977quasi}, Theorem 5.5,   or for a semi-classical version \cite{San06},
Theorem 4.1.11.} is well expressed in terms of the integer lattice $\Z^2$. 
    \end{enumerate}
    
    \myindent Let us discuss how we will use these ingredients in the case of $\sphere$. The case of the Euclidean disk is similar away of the boundary, but we use directly
    the known asymptotics of the eigenfunctions which turn out to express in terms of Bessel functions. Near the boundary, we rely on the same idea but we have to refine the estimate of gaps between caustic sets, and on more careful counting estimates.

    \myindent From the first two ingredients,  we know the asymptotic behaviour of the eigenfunctions near the caustics. Using the two next ingredients, we can estimate the distances between caustics for eigenfunctions corresponding to eigenvalues of $\Delta $ in the
    small interval.
    Finally,  we use  a simple counting argument using the improved remainder estimate in the Weyl law \eqref{generic}. 

    \myindent More precisely, let us start with the formula
    \begin{equation} \label{equ:id} \|  F(\sqrt{-\Delta})\|_{L^2(M) \rightarrow L^\infty(K)}^2  = \sup _{x\in  
    K} \sum _{j}F^2\left(\lambda_j\right)
   |\phi _j (x)|^2 \end{equation} 
    where $\lambda _j$'s are the eigenvalues of $\sqrt{-\Delta} $, and $\phi_j $ are the $L^2$-normalized joint eigenfunctions. Then, for a fixed $x \in K$, we may use the BKW decay estimate to bound $|\phi_j(x)|$ in terms of the distance between $x$ and the caustic of the Lagrangian torus associated to $\phi_j$. Now, we conclude by carefully using the distribution of the caustic sets.

    \myindent Note that we will use the {\it semi-classical approach} using $h:=(-\lambda )^{-\ha } $ as  semi-classical parameter. Then the joint spectrum system of equations writes
    \[ h P_1\phi =\phi, hP_2\phi =\mu  \phi \]
    where $\mu = \lim _{\lambda  \rightarrow  \infty } \frac{n  }{ \lambda}$ with $(\lambda, n )$ in the joint spectrum
    of $P_1 $ and $P_2 $.
    
      
    \section{Generic simple spheres of revolution}
    \subsection{First results}
    \subsubsection{The joint spectrum}
        \myindent We recall the following result, which follows from \cite{de1980spectre}[Section 6] and \cite{chabert2025bounds}[Lemma 2.8, eq. (3.7)].
        \begin{proposition}
            There exists $F \in \mathcal{C}^{\infty}(\R^2,\R_+)$ homogeneous of degree 1 such that the eigenvalues of $\Delta$ are given by
            \[\lambda_{k,n} := F(k+1/2, n) + O(|(k,n)|^{-1/2}) \qquad \ n\in \Z, \ k \geq |n|.\]
            
            \myindent Moreover, there holds 
            \[\partial_1 F \geq c > 0\]
            for some constant $c$. 
            
            \myindent Finally, if we let $Z := \{(\lambda_{k,n}, n): \ n\in \Z, \ k \geq |n|\}$ be the joint spectrum of $P_1$ and $P_2$, then all eigenvalues are simple, and we can thus find a corresponding basis of eigenfunctions
            \[\phi_{k,n}(s,\theta) := \psi_{k,n}(s) e^{in\theta}.\]
        \end{proposition}
    
    \myindent As a corollary, we may deduce the following gap lemma.
    \begin{lemma}\label{gaplem}
        For $\eps > 0$ fixed and $\lambda > 0$, let $B_{\lambda} := \{(u,v) \in \R^2: \ |u-\lambda|\leq \lambda^{-1/3}, \ |v| \leq u\}$. Then, for $\lambda$ large enough, for some constant $c > 0$, for any distinct pairs $\nu_j = (\lambda_{k_j,n_j},n_j) \in Z \cap B_\lambda$ $(j=1,2)$, there holds $|n_1 - n_2| \geq 1$.
    \end{lemma}
    \begin{proof}
        Assume that $n_1 = n_2 = n$. Then, there must hold $k_1 \neq k_2$, hence $|k_1 - k_2| \geq 1$ since they are integers. In particular, from the proposition,
        \[|\lambda_{k_1,n} - \lambda_{k_2,n}| \geq c - |(k_1,n)|^{-1/2}- |(k_2,n)|^{-1/2} \geq c/2\]
        as long as $\lambda$ is large enough. Thus it is not possible that $\nu_1$ and $\nu_2$ are both in $B_\lambda$ if (say) $\lambda^{-1/3} < c/2$.
    \end{proof}
    
    \subsubsection{The joint eigenfunctions are Lagrangian}
    
    \myindent Let $D$ be the closed subcone of $\R^2$ which is the image of the joint symbol $p=(p_1,p_2)$, i.e. $D = \{m=(u,v) \ | |v| \leq Au\}$ with $A = \sup_{s\in [0,L]} a(s)$. Let $C$ by the interior of $D$. We recall the following fact (see \cite{de1980spectre})
    \begin{lemma}\label{prop:Z}
      The set $\Omega := p^{-1}(C)$ is the regular phase space domain. In particular, for any $m\in C$, $L_m = p^{-1}(m)$ is a Lagrangian torus.
    \end{lemma}
    
    \myindent As a consequence, we obtain the following proposition, which follows easily from \cite{San06}[Proposition 3.2.12]
    \begin{proposition}
        Let $m = (1,\mu) \in C$, i.e. $|\mu| < A$. Then, any semiclassical solution of $hP_1 u = u, \ hP_2 u = \mu u$ is an oscillatory function associated to the Lagrangian manifold $L_\mu = L_{1,\mu}$ in the sense of Appendix \ref{app:lagr}. Furthermore, for any $\eps > 0$, all constants are uniformly bounded as long as $|\mu| < (1 - \eps)A$.
    \end{proposition}
    
    \myindent The proof uses a microlocal normal form reducing the problem to the case $hP_j = \frac{h}{j} \partial_j$.
    
    \myindent Now, this proposition applies to the joint eigenfunctions $\phi_{k,n}$, thanks to the following lemma.
    
    \begin{lemma}
      There holds $Z \subset C$
    \end{lemma}
    
    \begin{proof}
        Observe that $\lambda_{k,n}^2 = \int |\nabla \phi_{k,n}|^2$ is the Dirichlet integral of $\phi_{k,n}$. Now, for any solution of $P_2 u = nu$, the Dirichlet integral of $u$ is given by $Q(u) = \int_{\mathbb{S}} |\partial_s u|^2 + \frac{n^2}{a^2(s)} |u|^2$ which satisfies $Q(u) > \frac{n^2}{A} \|u\|_{L^2}^2$ for every nonzero $u$.
    \end{proof}
    
    \subsubsection{Fold-type caustics and uniform BKW estimate of joint eigenfunctions}\label{sec:caustics}
    
    \myindent In this section, we will prove the following  estimate, which is standard in the context of fold-type caustics, see \cite{Gu-Sc}.
    
\begin{proposition}\label{airyestimate}
For all $x \in K\subset U$, for all $\nu \in Z$, there holds
\[
    |(\phi_{\nu})(x)| \lesssim_K   |\nu|^{\frac{1}{6}} <d(x,\mathcal{C}_{\nu}) |\nu|^{\frac{2}{3}}>^{-\frac{1}{4}},
\]
where $\mathcal{C}_{\nu} = \{(s,\theta): \ a(s) = \nu,\ \theta \in S^1\}$ is the caustic set of the Lagrangian torus
$        L_{\nu} := p^{-1}(\nu) $    
and $d(x,\mathcal{C}_{\nu})$ is the distance between the point $x$ and the set $\mathcal{C}_{\nu}$.
\end{proposition}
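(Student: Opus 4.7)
The plan is to exploit that, by Section~\ref{sec:lag}, the $L^2$-normalized joint eigenfunction $\phi_\nu$ is, up to $O(h^\infty)$, a semi-classical Lagrangian oscillatory function attached to $L_\nu=p^{-1}(\nu)$, with semi-classical parameter $h=|\nu|^{-1}$. I would prove the pointwise bound locally on $K$ in two regimes depending on the distance to the caustic $\mathcal{C}_\nu$, and then patch them by a compactness argument, using that $K\subset V$ lies in a regular fold-type domain so that only finitely many sheets of $L_\nu$ and finitely many caustic branches project into $K$.

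Far from the caustic, say $d(x,\mathcal{C}_\nu)\geq C h^{2/3}$ with $C$ large, the projection $\Pi\colon L_\nu\to M$ is a local diffeomorphism on finitely many sheets above $x$. On each sheet the standard WKB ansatz expresses $\phi_\nu$ as $a(x,h)\,e^{iS(x)/h}+O(h^\infty)$, with leading amplitude proportional to $|\det D\Pi|^{-1/2}$. The fold hypothesis means $|\det D\Pi|$ vanishes to first order along $\mathcal{C}_\nu$, so $|a(x,h)|\lesssim_K d(x,\mathcal{C}_\nu)^{-1/4}$. Summing the bounded number of sheets and noting that $|\nu|^{1/6}\langle d(x,\mathcal{C}_\nu)|\nu|^{2/3}\rangle^{-1/4}\asymp d(x,\mathcal{C}_\nu)^{-1/4}$ in this regime settles the claim here.

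Near a fold point $x_0\in\mathcal{C}_\nu$, I would apply the classical normal form for Lagrangian distributions with fold singularities (Duistermaat; Hörmander Vol.~IV; Guillemin--Schaeffer \cite{Gu-Sc}): there exist local coordinates $(y,t)$ near $x_0$ in which $\mathcal{C}_\nu=\{t=0\}$ and a semi-classical Fourier integral operator $U$ such that
\[
(U^{-1}\phi_\nu)(y,t)=h^{-1/2}\int_{\R}e^{i(t\xi+\xi^3/3)/h}\,b(y,\xi,h)\,d\xi+O(h^\infty),
\]
with $b$ a classical symbol of compact $\xi$-support and smooth dependence on $y$. Rescaling $\xi=h^{1/3}\eta$ produces a prefactor $h^{-1/6}$ and the rescaled phase $(th^{-2/3})\eta+\eta^3/3$. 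Applying the uniform Airy bound $|\mathrm{Ai}(z)|\lesssim\langle z\rangle^{-1/4}$ to the principal term (with stationary phase for the lower-order symbol terms) yields $|\phi_\nu(x)|\lesssim h^{-1/6}\langle th^{-2/3}\rangle^{-1/4}$. Since $|t|\asymp d(x,\mathcal{C}_\nu)$ near $x_0$, this is exactly the desired estimate.

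Patching via a finite open cover of $K$ by patches of the two types produces the global bound with an implied constant depending only on $K$. The main obstacle in this plan is uniformity of the normal-form reduction in $\nu$: the implied constants in the symbol bounds and in the conjugating operator $U$ must not deteriorate when $\nu$ ranges over the cone of regular moments for which $x\in K$ meets a fold caustic. This uniformity is built into the definition of a regular fold-type domain together with the smooth dependence on $\nu/|\nu|$ of the generating phase of $L_\nu$, and it is the technical point one must verify carefully to make the local-to-global argument watertight.
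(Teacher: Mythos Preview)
Your approach is correct in substance and reaches the same conclusion, but it is organised differently from the paper. The paper does not split into ``near'' and ``far'' regimes and patch: it invokes directly the uniform fold expansion of \cite{Gu-Sc}, which writes $\phi_\nu$ on a full neighbourhood of the caustic as
\[
e^{i\rho_0/h}\Big(h^{-1/6}a_0\,\mathrm{Ai}(\rho_1/h^{2/3})+h^{1/6}a_1\,\mathrm{Ai}'(\rho_1/h^{2/3})\Big),
\]
with $\rho_1$ a defining function of $\mathcal C_\nu$ and $a_0,a_1$ classical symbols. The bounds $|\mathrm{Ai}(t)|\lesssim\langle t\rangle^{-1/4}$, $|\mathrm{Ai}'(t)|\lesssim\langle t\rangle^{1/4}$ then give the estimate in one stroke (the $\mathrm{Ai}'$ term contributes $O(1)$, which is dominated by the stated bound on $K$). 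Your WKB/Airy matching reproduces this by hand; the advantage of the uniform expansion is that it avoids the patching and makes the $\nu$-uniformity transparent, while your decomposition makes the geometric origin of the $d^{-1/4}$ amplitude (via $|\det D\Pi|^{-1/2}$) more explicit.

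One point to tighten: in your near-caustic normal form you conjugate by a ``semi-classical FIO $U$'' and then read off pointwise bounds on $U^{-1}\phi_\nu$. A genuine FIO of order $0$ need not preserve $L^\infty$ pointwise estimates, so as written the transfer back to $\phi_\nu$ is not justified. The Guillemin--Schaeffer normal form is in fact obtained by a smooth change of coordinates on the base together with the phase reduction (the Chester--Friedman--Ursell mechanism), not by a microlocal conjugation; once you say this, your argument goes through. Your concern about uniformity in $\nu$ is exactly what the paper addresses in its last paragraph: when $\nu/|\nu|$ approaches $\partial C$ the eigenfunction decays rapidly on $K$, and when it approaches the axis the caustic stays at distance $\gtrsim 1$ from $K$, so only a compact range of directions $\nu/|\nu|$ matters.
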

\myindent Note that it follows from Lemma \ref{prop:Z}, that $L_\nu $ is a Lagrangian torus. Note also that the caustic set of $L_\nu $ is the same as the caustic
set of $L_{\tau \nu} $ for any $\tau >0 $. 

\myindent The result will essentially follow from \cite{Gu-Sc}. Their  main result is 
\begin{theorem}
Near any point of a fold singularity of a Lagrangian manifold $L$, any oscillating function $u(x,h)$ of order $0$
admits the following behaviour:
\[ u(x,h) = e^{i\rho_0 (x)/h} \left\{h^{-1/6}a_0(x,h){\rm Ai} \left( \frac{\rho_1(x)}{h^{2/3}}\right)a_0 (x,h)+
h^{1/6}a_1(x,h){\rm Ai}' \left( \frac{\rho_1(x)}{h^{2/3}}\right)a_1 (x,h)  \right\} \]
where
\begin{itemize}
  \item 
$a_0$ and $a_1$ admit asymptotic  expansions in $h$ of degree $0$. 
\item 
   $\rho _0$ and $  \rho_1$ are smooth real real-valued functions.
\item  The caustic is defined by $\rho_1=0$  and $d\rho_1 \ne 0$ at each point of the caustic.
  \end{itemize} 
\end{theorem}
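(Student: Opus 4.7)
\myindent The plan is to write $u$ locally as an $h$-oscillatory integral, put its phase into a cubic normal form adapted to the fold, and recognize the remaining one-dimensional integral as an Airy integral. By Duistermaat's theorem \cite{duistermaat1974oscillatory}, every oscillating function of order $0$ attached to a Lagrangian $L$ can be written, near any given point, as
\[ u(x,h) = (2\pi h)^{-N/2} \int_{\R^N} e^{i\phi(x,\theta)/h}\, a(x,\theta,h)\, d\theta, \]
where $\phi$ is a non-degenerate phase function parametrizing $L$ via $L = \{(x, \partial_x \phi(x,\theta)) : \partial_\theta \phi(x,\theta) = 0\}$ and $a \sim \sum_{k \geq 0} h^k a_k(x,\theta)$ is a classical symbol. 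The fold hypothesis on $\Pi : L \to M$ translates into a fold of the critical set $C_\phi := \{\partial_\theta \phi = 0\}$ over $M$: $\partial_\theta^2 \phi$ has corank exactly one at the relevant point, and the kernel direction is transverse to the vanishing locus of $\det \partial_\theta^2 \phi$.

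\myindent First I would reduce $\phi$ to a cubic normal form in one fiber variable. Splitting $\theta = (\theta_1,\theta')$ so that $\partial_{\theta_1}$ spans the kernel of $\partial_\theta^2 \phi$, the complementary block $\partial_{\theta'}^2 \phi$ is non-degenerate, and the Morse lemma with parameters in $(x,\theta_1)$ produces new fiber variables in which
\[ \phi(x,\theta) = \tilde\phi(x,\theta_1) + Q(\theta'), \]
with $Q$ a non-degenerate quadratic form independent of $(x,\theta_1)$. For $\tilde\phi$, the fold condition says that $\partial_{\theta_1}\tilde\phi$ has a degenerate critical point in $\theta_1$ unfolding versally with $x$; the Malgrange preparation theorem applied to this $A_2$ singularity with parameters yields smooth functions $\rho_0(x), \rho_1(x)$ and a smooth fiber-preserving change of variables bringing $\tilde\phi$ to
\[ \tilde\phi(x,\theta_1) = \rho_0(x) + \rho_1(x)\theta_1 - \frac{\theta_1^3}{3}, \]
with the caustic $\{\rho_1 = 0\}$ and $d\rho_1 \neq 0$ there. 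The Jacobians of these changes of variables are smooth and non-vanishing, so they get absorbed into $a$ and preserve the classical symbol structure.

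\myindent Next I would integrate out $\theta'$ by stationary phase. Since $Q$ is non-degenerate, there is a classical symbol $b(x,\theta_1,h) \sim \sum h^k b_k$ with
\[ (2\pi h)^{-(N-1)/2}\!\int_{\R^{N-1}}\! e^{iQ(\theta')/h} a(x,\theta_1,\theta',h)\, d\theta' \sim e^{i\pi \sigma/4} |\det Q|^{-1/2}\, b(x,\theta_1,h), \]
where $\sigma = \mathrm{sgn}(Q)$. What remains is a one-dimensional integral, which after rescaling $\theta_1 = h^{1/3} t$ and Taylor-expanding $b(x,h^{1/3} t, h) = \sum_{k \geq 0} (h^{1/3} t)^k b_{(k)}(x,h)/k!$ breaks into model integrals
\[ \int_{\R} e^{i(\rho_1(x) h^{-2/3} t - t^3/3)} t^k\, dt = 2\pi (-i)^k \partial_s^k \mathrm{Ai}(s)\big|_{s = \rho_1(x)/h^{2/3}}. \]
Using $\mathrm{Ai}''(s) = s\,\mathrm{Ai}(s)$ to reduce every $\partial_s^k \mathrm{Ai}$ to $P_k(s)\mathrm{Ai}(s) + Q_k(s)\mathrm{Ai}'(s)$ for polynomials $P_k,Q_k$, then substituting $s = \rho_1/h^{2/3}$, combining with the overall $(2\pi h)^{-1/2}\cdot h^{1/3} = $ const$\cdot h^{-1/6}$ prefactor, and Borel-summing the two resulting formal series in $h^{1/3}$, I obtain exactly
\[ u(x,h) \sim e^{i\rho_0(x)/h} \left\{h^{-1/6} a_0(x,h)\,\mathrm{Ai}\!\left(\rho_1(x)/h^{2/3}\right) + h^{1/6} a_1(x,h)\,\mathrm{Ai}'\!\left(\rho_1(x)/h^{2/3}\right)\right\}, \]
with $a_0,a_1$ classical symbols of degree $0$ in $h$.

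\myindent The main obstacle is the fold normal form of $\phi$ with smooth dependence on the parameter $x$: the Morse splitting in $\theta'$ and the $A_2$-preparation in $\theta_1$ must be performed simultaneously, and one must verify that the cubic plus $x$-dependent linear term really is the versal unfolding of the fold; this is where the fold hypothesis (corank one Hessian with transverse kernel) is used in an essential way. Once the normal form is in place, the rest of the argument is the symbolic bookkeeping outlined above, which is a standard application of stationary phase together with the Airy ODE.
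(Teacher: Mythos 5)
The paper does not actually prove this theorem: it is stated as the main result of \cite{Gu-Sc} and simply quoted, so your proposal should be compared with the classical proof in that reference (and in Duistermaat's paper and Ludwig's earlier work) rather than with anything in the text. Your outline is essentially that classical proof and is correct in structure: oscillatory-integral representation, Morse splitting of the non-degenerate fiber block, reduction of the residual one-variable phase to the $A_2$ normal form $\rho_0(x)+\rho_1(x)\theta_1-\theta_1^3/3$ via the preparation theorem and the equivalence theorem for phase functions, stationary phase in $\theta'$, and identification of the remaining integral with $\mathrm{Ai}$ and $\mathrm{Ai}'$. Two remarks on where your route differs from the standard one and where it needs more care. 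First, after the normal form is in place, Guillemin--Schaeffer and Duistermaat do not Taylor-expand the amplitude at $\theta_1=0$ and resum; they divide the amplitude by the gradient ideal of the phase, writing $b(x,\theta_1,h)=A_0(x,h)+A_1(x,h)\theta_1+c(x,\theta_1,h)\,\partial_{\theta_1}\tilde\phi$ (again by Malgrange preparation, since $\partial_{\theta_1}\tilde\phi=\rho_1-\theta_1^2$ is a versal quadratic), and integrate the last term by parts to gain a power of $h$; iterating produces the two symbols $a_0,a_1$ directly, with no infinite resummation in $h^{1/3}$ and no convergence issue. Your version can be made to work, but you must control the Taylor remainder: the model integrals $\int e^{i(st-t^3/3)}t^k\,dt$ only make sense as oscillatory integrals, the rescaling $\theta_1=h^{1/3}t$ sends the (compact) support of $b$ to a ball of radius $h^{-1/3}$, and you need uniform bounds on the remainder term there (non-stationary phase for $|t|\gtrsim 1$) before Borel summation; as written this step is a genuine gap in rigor, though a repairable one. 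Second, a minor point: with the phase $\rho_1 t-t^3/3$ the model integral evaluates to $2\pi\,\mathrm{Ai}(-s)$ in the normalization $\mathrm{Ai}(x)=\frac{1}{2\pi}\int e^{i(t^3/3+xt)}dt$, so your $\rho_1$ is the negative of the one in the statement; this is only a choice of orientation of the caustic and is immaterial, but worth flagging so the illuminated and shadow sides are not interchanged when the estimate $\mathrm{Ai}(t)=O(\langle t\rangle^{-1/4})$ versus rapid decay is applied later.
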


\myindent We then use the bound  for Airy 
${\rm Ai}(t)  =O\left( <t>^{-1/4} \right) $ and ${\rm Ai}'(t)=O\left(  <t>^{1/4}\right) $.
The contribution of ${\rm Ai}$ gives the right bound in the proposition, while that of ${\rm Ai}'$ is $O(1)$ (see Appendix \ref{sec:airy}). 

\myindent But we have to take care of the caustics approaching the equator or the poles.
In fact, this is not at all a problem because $x$ stays in some compact $K\subset V$:
if $\nu $ is close to $\partial C$, then $\phi_\nu (x)$ has fast decay in $\nu $.
If $\nu $ is close to the axis $v=0$, $x$ is at a distance $O(1)$ of the corresponding caustic, hence $\phi_\nu (x)$ stays bounded. 


\subsubsection{Distribution of the caustic sets}\label{sec:fol}

\myindent In order to apply Proposition \ref{airyestimate}, we need to prove estimates on the distribution of the caustic sets $\mathcal{C}_\mu$ for 
$\mu \in \mathcal{I}=]-A,A[\setminus 0$ over $U$.

\myindent We will first give a formulation in the context of {\it Differential Topology}. Let $W$ be a $3D$ manifold,
          and $p : W \to \R$ a smooth function whose differential does not vanish.
          For any $\mu \in p(W)$, let $L_\mu  \subset W$ be the $2D$ surface $p^{-1}(\mu)$. Let us give a $2D$ manifold $U$ and a submersion $\Pi : W \to U$.
          Our main assumption is that the restrictions $\Pi_\mu$ of $\Pi$ to $L_\mu$ admits fold singularities along smooth curves $\Gamma_\mu \subset L_\mu$.
          We denote by $C_\mu$ the caustics, namely $C_\mu = \Pi(\Gamma_\mu)$. We assume that the curves $C_\mu$
          are smooth submanifolds of $M$. Then, the following holds.

\begin{lemma}\label{lemm:caus}
    The family of curves $C_\mu$ is a smooth foliation of $\Pi(W)$ and the differential of the function $J$ defined by $J_{|C_\mu} := \mu$ does not vanish.
\end{lemma}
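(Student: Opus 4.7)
The plan is to introduce the total fold locus $\Sigma := \bigsqcup_\mu \Gamma_\mu \subset W$ and to establish three properties: (a) $\Sigma$ is a smooth $2$-dimensional submanifold of $W$; (b) the function $p$ restricted to $\Sigma$ is a submersion onto $\R$; (c) the projection $\Pi$ restricted to $\Sigma$ is a local diffeomorphism onto its image in $U$. Once these are in place, I would define $J := p \circ (\Pi|_\Sigma)^{-1}$ on $\Pi(\Sigma)$, which is automatically smooth with nowhere-vanishing differential by (b) and (c), and whose level sets are the $C_\mu$.

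For (a), I would work in local coordinates $(x_1,x_2,\mu)$ on $W$ adapted to $p$ (so that $p(x_1,x_2,\mu) = \mu$), and write $\Pi = (f_1, f_2)$ in a chart on $U$. Then $\Sigma$ is cut out by the ``vertical Jacobian''
\[
    j(x_1, x_2, \mu) := \frac{\partial(f_1, f_2)}{\partial(x_1, x_2)}.
\]
For each fixed $\mu$, the zero set $\{j(\cdot,\mu) = 0\} = \Gamma_\mu$ is smooth by hypothesis; moreover, the fold normal form $(x_1, x_2) \mapsto (x_1^2, x_2)$ for $\Pi_\mu$ shows that $j$ vanishes transversally to first order on $\Gamma_\mu$, so the partial gradient $(\partial_{x_1} j, \partial_{x_2} j)$ is nonzero along $\Sigma$. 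A fortiori $dj$ is nonzero on $\Sigma$, which makes $\Sigma$ a smooth $2$-manifold. Property (b) is then a transversality count: $\Sigma \cap L_\mu = \Gamma_\mu$ is a smooth $1$-manifold sitting inside both of the $2$-manifolds $\Sigma$ and $L_\mu$ in the $3$-manifold $W$, so $\Sigma$ and $L_\mu$ intersect transversally, and hence $dp$ does not vanish on $T\Sigma$.

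Property (c) is the geometrically substantive step. Fix $w \in \Sigma$ with $\mu := p(w)$. Since $\Pi$ is a submersion, $\ell_w := \ker d\Pi_w$ is a $1$-dimensional line; by the very definition of $\Sigma$, $\ell_w \subset T_w L_\mu$. The fold hypothesis on $\Pi_\mu$ at $w$ (i.e.\ Whitney's normal form) yields $T_w\Gamma_\mu \cap \ell_w = \{0\}$, so $T_w \Gamma_\mu$ and $\ell_w$ together span $T_w L_\mu$. I would then argue by contradiction: if $\ell_w \subset T_w\Sigma$ were to hold, then $T_w \Sigma$ would contain both $T_w \Gamma_\mu$ and $\ell_w$, forcing $T_w \Sigma = T_w L_\mu$, which contradicts the transversality established in (b). Hence $\ell_w \not\subset T_w\Sigma$, and $d\Pi_w$ is injective on the $2$-plane $T_w\Sigma$, hence a linear isomorphism onto $T_{\Pi(w)} U$.

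With (a)--(c) established, $J := p \circ (\Pi|_\Sigma)^{-1}$ is a smooth submersion on $\Pi(\Sigma)$, and $J^{-1}(\mu) = \Pi(p^{-1}(\mu) \cap \Sigma) = \Pi(\Gamma_\mu) = C_\mu$, so the level sets of $J$ are precisely the caustics, forming a smooth codimension-$1$ foliation. The main obstacle I anticipate is step (c): turning the pointwise fold condition into the global statement $\ell \not\subset T\Sigma$ along $\Sigma$ uses the fold hypothesis essentially (a generic rank-drop singularity such as a Whitney cusp would not suffice, because there $T\Gamma_\mu$ would coincide with $\ell$ and the transversality argument would collapse).
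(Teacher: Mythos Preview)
Your approach is essentially the paper's: introduce the total fold locus $\Sigma = Y := \bigcup_\mu \Gamma_\mu$, show it is a smooth surface on which $p$ is a submersion, and show $\Pi|_\Sigma$ is a local diffeomorphism; then $J = p\circ(\Pi|_\Sigma)^{-1}$ does the job. The paper obtains smoothness of $Y$ via a parametrisation $F(s,\mu)$ of the $\Gamma_\mu$, while you use the implicit function theorem applied to the vertical Jacobian $j$; and your argument for (c) spells out in full the transversality the paper only names (``the kernel of $d\Pi$ along $Y$ is transversal to $\Gamma_\mu$'') before referring to Golubitsky--Guillemin.

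One point to fix: the justification you give for (b) is not valid as a pure dimension count. Two smooth surfaces in a $3$-manifold can meet along a smooth curve without being transversal --- e.g.\ $\{z=0\}$ and $\{z=x^2\}$ in $\R^3$ meet along the $y$-axis with coinciding tangent planes there. The correct reason $\Sigma\pitchfork L_\mu$ is already contained in your step (a): the nonvanishing of the partial gradient $(\partial_{x_1} j,\partial_{x_2} j)$ along $\Sigma$ says precisely that $dj$ does not annihilate $T_wL_\mu$, i.e.\ $T_wL_\mu \not\subset \ker dj_w = T_w\Sigma$. With this replacement your argument is complete, and in particular your contradiction in (c) (which relies on (b)) goes through.
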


\begin{proof}[Proof of Lemma \ref{lemm:caus}]
  Let $Y \subset W$ be the set defined by $Y = \cup_\mu \Gamma_\mu$.
  Let us show that $Y$ is a surface foliated by the $\Gamma_\mu$. The $\Gamma_\mu$ form a smooth family,
  meaning that there exists a smooth function $F(s,\mu)$ so that $\Gamma_\mu = \{F(\cdot,\mu)\}$.
  It is enough to check that the differential of the function $K:= p_{|Y}$ does not vanish. Let $\gamma(\mu) = F(t,\mu)$ for a $t$ fixed.
  Then, $dp(\dot{\gamma}) = 1 = dK(\dot{\gamma}) \neq 0$. This proves also that $F$ is an embedding.
    
  \myindent It is then enough to prove that $\Pi_{|Y}$ is a diffeomorphism. This follows from the fact that $\Pi$ is a submersion: the kernel of $d\Pi$
  along $Y$ is transversal to $\Gamma_\mu$. For more details, we refer to \cite{Go-Gu}[Chapter III, section 4].
\end{proof}

\myindent Now, we can apply this Lemma to $W := \Omega \cap p_1^{-1}(1)$, $p=p_2$  and $\Pi:W \rightarrow U$  the bundle projection. We  deduce the following:

\begin{corollary}\label{spacingcor}
    For all $\mu, \mu' \in \mathcal{K} \subset \subset  \mathcal{I}$, there holds
    \[
    d(\mathcal{C}_\mu\cap U, \mathcal{C}_{\mu'} \cap U) \gtrsim |\mu -\mu'|.
    \]
\end{corollary}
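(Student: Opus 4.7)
The plan is to extract the metric inequality from Lemma \ref{lemm:caus} via a straightforward mean-value argument.

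First, I would apply Lemma \ref{lemm:caus} in the stated setup $W := \Omega \cap p_1^{-1}(1)$, $p := p_2|_W$, $\Pi : W \to U$, verifying its hypotheses: by quantum integrability, $W$ is a smooth 3-manifold on which $dp_2 \neq 0$; that $U$ is a regular domain of fold type gives the fold structure of the restrictions $\Pi_\mu$ along smooth curves $\Gamma_\mu$ with smooth caustic images $\mathcal{C}_\mu$; and $\Pi$ is a submersion as the restriction of the canonical bundle projection to a regular level set. Lemma \ref{lemm:caus} then produces a smooth function $J : \Pi(W) \to \mathcal{I}$ with $J|_{\mathcal{C}_\mu} = \mu$ and $dJ \neq 0$ everywhere on $\Pi(W)$.

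Second, I would convert this into a distance estimate. Fix a compact $\mathcal{K} \subset\subset \mathcal{I}$. Since $\mathcal{K}$ stays away both from $0$ and from $\pm A$, the corresponding caustics remain in a compact subset $V$ of $\Pi(W) \subset U$: in the sphere-of-revolution picture, each $\mathcal{C}_\mu$ consists of the pair of parallels $\{a(s)=|\mu|\}$, which collapses onto the equator as $|\mu| \to A$ and onto the poles as $\mu \to 0$, so the excluded degeneracies correspond exactly to these two endpoints of $\mathcal{I}$. On $V$, the smooth function $J$ has bounded gradient, $|\nabla J| \leq C_{\mathcal{K}} < \infty$. Given $x \in \mathcal{C}_\mu \cap U$ and $y \in \mathcal{C}_{\mu'} \cap U$ with $\mu, \mu' \in \mathcal{K}$, the mean-value inequality along a minimizing path (which stays in a mild enlargement of $V$ for small $|\mu-\mu'|$; the large case is trivial since both sides are then bounded below on $\mathcal{K}\times\mathcal{K}$) yields
\[
    |\mu - \mu'| \;=\; |J(x) - J(y)| \;\leq\; C_{\mathcal{K}} \, d(x,y),
\]
and taking the infimum over $x,y$ gives the asserted $d(\mathcal{C}_\mu \cap U, \mathcal{C}_{\mu'} \cap U) \geq C_{\mathcal{K}}^{-1} |\mu - \mu'|$.

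The main (mild) obstacle is the compactness step: checking that, as $\mu$ ranges over a compact subset of $\mathcal{I}$, the caustic curves $\mathcal{C}_\mu$ jointly lie in a common compact subset of $U$, so that $|\nabla J|$ enjoys a uniform upper bound. This reduces to the concrete degeneracy analysis of the caustic foliation at the two types of excluded loci (poles and equator), which is a purely geometric statement already implicit in the choice of $U$ and $\mathcal{I}$. Once this is granted, the remaining argument is the standard conversion of a smooth non-degenerate defining function into a Lipschitz bound on the separation between its level sets.
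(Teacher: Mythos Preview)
Your proposal is correct and is essentially the same as the paper's approach: the paper deduces the corollary directly from Lemma \ref{lemm:caus} applied to $W=\Omega\cap p_1^{-1}(1)$, $p=p_2$, $\Pi$ the bundle projection, without spelling out the mean-value step you supply. Your additional verification that caustics for $\mu\in\mathcal{K}\subset\subset\mathcal{I}$ remain in a compact subset of $U$ (via the explicit description $\mathcal{C}_\mu=\{a(s)=|\mu|\}$) is exactly what is implicitly used to bound $|\nabla J|$, so you have filled in the only nontrivial detail the paper leaves to the reader.
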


\subsection{Proof of Theorem \ref{main} in the case of the sphere}


  \myindent Using Equation (\ref{equ:id}) and Proposition \ref{airyestimate},  we can start with the  bound
    \begin{equation}\label{fstboundproof}
      \left\| P_{\lambda,\lambda^{-\frac{1}{3}}}\right\|_{L^2(M) \to L^{\infty}(K)}^2 \lesssim
      \sup_{x \in K} \sum_{\nu=(\lambda_{k,n},n) \in Z, |\lambda_{k,n} - \lambda| \leq \lambda^{-1/3}}
      |\nu|^{\frac{1}{3}} <d(x,\mathcal{C}_{\nu}) |\nu|^{\frac{2}{3}}>^{-\frac{1}{2}}.
    \end{equation}
    
    \myindent Now,
    we know that the number of nonzero terms in the sum is a $O(\lambda^{\frac{2}{3}})$ thanks to the improved estimate on the remainder of Weyl counting function \eqref{generic}.
    Moreover, let us enumerate those $\nu \in Z$ such that $|\lambda_{k,n} - \lambda| \leq \lambda^{-1/3}$,  say by $\nu_1,\cdots, \nu_l=(\lambda_l, n_l),\cdots , \nu_K$, where $K = O(\lambda^{\frac{2}{3}})$. Then, if we denote
    \[ \mu_l :=|n_l|/\lambda _l  ~, \]
    it follows from Lemma \ref{gaplem}, that there holds \[ |\mu_l - \mu_{l'}| \gtrsim |l - l' |/ \lambda.\] 
   
    \myindent In particular, thanks to Corollary \ref{spacingcor}, we may bound the right-hand side of \eqref{fstboundproof} by
    \[
    \begin{split}
      \sum_{l = 0}^{O(\lambda^{\frac{2}{3}})} \lambda^{\frac{1}{3}} <l\lambda^{-1} \lambda^{\frac{2}{3}}>^{-\frac{1}{2}}
      &= \lambda^{\frac{1}{3}} \sum_{l = 0}^{O(\lambda^{\frac{2}{3}})} <l \lambda^{-\frac{1}{3}}>^{-\frac{1}{
        2}}\\
        &= \lambda^{\frac{1}{3}}\sum_{l = 0}^{\lambda^{\frac{1}{3}}} O(1) + \lambda^{\frac{1}{3}}\sum_{l = \lambda^{\frac{1}{3}}}^{O(\lambda^{\frac{2}{3}})} \lambda^{\frac{1}{6}} l^{-\frac{1}{2}}.
    \end{split}
    \]
    
     \myindent Now, the first sum is of order $O(\lambda^{\frac{2}{3}})$. The second may be bounded by
    \[
      \lambda^{\frac{1}{2}} \sum_{l = 0}^{O(\lambda^{\frac{2}{3}})} l^{-\frac{1}{2}} \lesssim \lambda^{\frac{1}{2}}
      \left(\lambda^{\frac{2}{3}}\right)^{\frac{1}{2}} \lesssim \lambda^{1 - \frac{1}{6}},
    \]
    which concludes the proof of the theorem in the case of spheres of revolution. 

\section{The case of the Euclidean disk} \label{sec:eucl}

 \myindent Let us consider the Euclidean disk $\mathbb{D}$ with Dirichlet boundary conditions\footnote{
 We could have looked also at the Neumann boundary conditions with similar
techniques and results}.
 The proof for the case where $K\cap \partial \D=\emptyset $ is quasi identical to
the case of the sphere, but some estimates  are  simpler
 using the explicit expression of the eigenfunctions in terms of the Bessel
functions $J_n$. 
 
\myindent As it is well known and used in
\cite{YCdV-disque, CGJ17}, the normalized  eigenfunctions are given in polar
coordinates by
\[ \phi_{k,n}(r,\theta) = c_{k,n} J_n \left( \lambda_{k,n} r\right)e^{in\theta } \]
for some positive  scalars $c_{k,n}$, 
where
\[\forall n \in \Z,~  J_n (t)= \frac{1}{2\pi} \int _{\R/2\pi \Z} e^{i(n\alpha -
t\sin \alpha) } d\alpha   ~, \]
  $\lambda _{k,n} $ is for $k\in \N$ the $k-$th zero of $J_n$.
We have $\Delta \phi_{k,n}= -\lambda_{k,n}^2 \phi_{k,n} $.
In what follows, we restrict ourselves to $n\geq 0$ using $J_{-n}(t)=(-1)^n{J_n }(t)$. We may moreover restrict to $\eps \leq \frac{n}{\lambda_{k,n}}$ for some $\eps > 0$ small enough, since from the explicit expression, $J_n$ decreases exponentially fast for $t > \frac{n}{\lambda_{k,n}}$, and $O \notin K$.

\myindent For $n\geq \epsilon \lambda_{k,n}$ with some $\epsilon>0$, the Lagrangian manifolds
$L_\mu $ are given by 
\begin{equation} \label{equ:lag-d} L_\mu  := \left\{ \left(r,\theta, \pm
\sqrt{\mu^{-2}-r^{-2}}, 1 \right)~ | ~r\geq \mu \right\},
\end{equation}
with $\mu =\mu_{k,n}= \frac{n}{\lambda_{k,n}}$ and the semi-classical parameter $1/n
$.

\myindent Using directly the previous expression of the eigenfunctions, we are left to
evaluate the following quantity:
\[ \|1_{[\lambda-\lambda^{-1/3},\lambda + \lambda^{-1/3}]}(\sqrt{-\Delta})
\|_{L^2 (\D) \rightarrow L^\infty (K)}=\sqrt{
\sup _{x\in K} \sum_{|\lambda_{k,n} - \lambda|\leq \lambda^{-1/3}} \phi_{k,n}(x)|^2}. \]

\myindent The joint spectrum is now the set  $Z:=\{(\lambda _{k,n}, n )|k\in \N , n\in \Z \}$
where the $ \lambda _{k,n}$'s  are the zeroes of $J_n$.
Note that $Z\subset \{ (\lambda, n) ~|~ |n| \leq \lambda \} $ as follows from the
book \cite{Watson}, p. 86.

\myindent Note that we really need to avoid $O$ in the compact sets $K$ in order to get non
trivial results:
indeed, look at  the sequence $\phi_k (r,\theta )=J_0 (\lambda_{k,0} r) $ where
$\lambda_{k,0}$ is the sequence of zeroes of $J_0$.
We have then $\phi_k(O)=1 $ while $\| \phi_k \|_{L^2(\D)}\simeq
1/\sqrt{\lambda_{k,0}}$.

\quad

\myindent We will need the following lemma
\begin{lemma}\label{lemm:l2}
  The $L^2(\D) $ norms of the eigenfunctions
$J_n\left((\lambda_{k,n}r\right))e^{in\theta }$ for
  $\mu_{k,n}= |n|/\lambda _{k,n}  $  is equivalent to $({1-\mu})^{1/4}/\sqrt{n}$
uniformly as $n\rightarrow \infty $.  
\end{lemma}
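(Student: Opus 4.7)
The plan is to reduce the $L^2(\D)$ norm of the eigenfunction to a single boundary value via a classical Lommel integral, and then to estimate that boundary value through Debye's asymptotics of $J_n$. Passing to polar coordinates, the angular integration is trivial and
\[
\|J_n(\lambda_{k,n}r)e^{in\theta}\|_{L^2(\D)}^2 = 2\pi \int_0^1 J_n(\lambda_{k,n}r)^2\, r\, dr.
\]

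Next I would apply the standard Lommel-type identity, derived by multiplying the Bessel equation $y''+y'/r+(k^2-n^2/r^2)y=0$ by $r^2 y'$ and integrating from $0$ to $a$:
\[
\int_0^a r J_n(kr)^2\, dr = \frac{a^2}{2}\left[J_n'(ka)^2 + \left(1-\frac{n^2}{k^2 a^2}\right)J_n(ka)^2\right].
\]
Taking $a=1$, $k=\lambda_{k,n}$, and using $J_n(\lambda_{k,n})=0$ collapses this to $\pi J_n'(\lambda_{k,n})^2$. It remains to estimate $J_n'(\lambda_{k,n})^2$. I would invoke Debye's asymptotic expansion in the oscillatory region $t > n$,
\[
J_n(t) = \sqrt{\frac{2}{\pi t\sqrt{1 - n^2/t^2}}}\,\cos\psi(t) + O(n^{-3/2}),
\]
with phase $\psi(t)=\sqrt{t^2-n^2}-n\arccos(n/t)-\pi/4$, so that $\psi'(t)=\sqrt{1-n^2/t^2}$. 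Differentiating and keeping only the leading contribution,
\[
J_n'(t)^2 = \frac{2\sqrt{1-n^2/t^2}}{\pi t}\sin^2\psi(t) + O(n^{-2}).
\]
At a zero of $J_n$ we have $\cos\psi=O(n^{-1})$, hence $\sin^2\psi = 1+O(n^{-2})$, so with $\mu=n/\lambda_{k,n}$,
\[
\|J_n(\lambda_{k,n}r)e^{in\theta}\|_{L^2(\D)}^2 \sim \frac{2\sqrt{1-\mu^2}}{\lambda_{k,n}} = \frac{2\mu\sqrt{(1-\mu)(1+\mu)}}{n}.
\]
Taking square roots and noting that $\sqrt{\mu}$ and $(1+\mu)^{1/4}$ are bounded above and below on any compact subset of $(0,1)$ yields the claimed equivalent $(1-\mu)^{1/4}/\sqrt{n}$.

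The main obstacle is uniformity as $\mu\to 1^-$: Debye's expansion degenerates at the turning point $t=n$ and its error terms cease to be uniform there, yet the advertised factor $(1-\mu)^{1/4}$ is precisely a statement about that transition. To recover the equivalent uniformly up to the caustic I would replace the Debye step by Olver's uniform Airy-type expansion of $J_n(n\sec\beta)$, in which the amplitude carries an explicit factor proportional to $(1-\mu^2)^{-1/4}$ multiplied by an Airy function; zeros of $J_n$ correspond to zeros of that Airy factor and a direct substitution, identical in spirit to the Debye calculation above, produces the same $(1-\mu^2)^{1/4}/\sqrt n$ behaviour uniformly in $\mu$, completing the lemma.
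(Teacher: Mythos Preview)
Your proof is correct and takes a genuinely different route from the paper's. The paper computes the $L^2$ norm by substituting the uniform Airy approximation $J_n(\lambda r)\sim n^{-1/3}\,\mathrm{Ai}\bigl(-n^{2/3}\rho(r)\bigr)$ (with $\rho(r)\sim r-\mu$, coming from the fold-caustic description of Section~\ref{sec:caustics}) directly into the radial integral and then invoking the growth $\int_{-X}^{\infty}\mathrm{Ai}^2(t)\,dt\sim\sqrt{X}$. You instead collapse the integral \emph{exactly} to the single boundary value $\pi\, J_n'(\lambda_{k,n})^2$ via the Lommel identity, and only afterwards apply asymptotics. Your route is more elementary and yields explicit constants when $\mu$ stays in a compact subset of $(0,1)$; the paper's route, by contrast, is uniform through the turning-point regime $1-\mu\sim n^{-2/3}$ from the outset, since the Airy model is built in. You correctly identify that Debye's expansion degenerates there and that Olver's uniform expansion is the required substitute; carrying that out (using $\mathrm{Ai}'(-a_k)\sim \pi^{-1/2}a_k^{1/4}$ at the Airy zeros together with Olver's amplitude factor) does recover the $(1-\mu)^{1/4}/\sqrt{n}$ behaviour uniformly, so the sketch in your final paragraph is sound.
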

{\it Proof of Lemma.--}
The proof follows simply from the asymptotics
\[ J_n (\lambda   r ) \sim n^{-1/3}{\rm Ai} \left(-n^{2/3} \rho (r) \right) \]
with $\rho (r) \sim r-\mu  $ which is known  from  Section \ref{sec:caustics}.
We need also 
 the asymptotics
 $\int_{-X}^\infty {\rm Ai}^2 (X) dX \sim \sqrt{X}$ as $X \rightarrow +\infty$.\hfil $\square $

\begin{remark}
    Observe that, for any term in the sum, there holds
    \[ 1 - \mu_{k,n} \gtrsim \lambda^{-\frac{2}{3}}\]

   This follows from Appendix \ref{app:bessel}.
\end{remark}

\myindent The following property follows: 
 \begin{proposition} \label{prop:norm}  Given $0< \epsilon <1  $, The constants
$c_{k,n}$ satisfy, for $\epsilon \lambda_{k,n} \leq |n| < \lambda_{k,n}$,
   $c_{k,n}\sim n^{1/2} (1-\mu_{k,n}) ^{-1/4} $ with
   $\mu_{k,n} = |n|/ \lambda_{k,n} $ uniformly as $\lambda_{k,n} \rightarrow \infty $.
   \end{proposition}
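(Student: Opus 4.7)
The plan is to read off the claim directly from the $L^2$-normalization of $\phi_{k,n}$ combined with Lemma \ref{lemm:l2}. By construction, $c_{k,n}$ is defined so that $\|\phi_{k,n}\|_{L^2(\D)}=1$. Writing the norm in polar coordinates gives
\[
 1 = \|\phi_{k,n}\|_{L^2(\D)}^2 = c_{k,n}^2 \int_0^{2\pi}\!\!\int_0^1 |J_n(\lambda_{k,n} r)|^2\, r\, dr\, d\theta = 2\pi c_{k,n}^2 \int_0^1 |J_n(\lambda_{k,n} r)|^2\, r\, dr.
\]
Hence $c_{k,n}$ is exactly the inverse of the $L^2(\D)$ norm of $J_n(\lambda_{k,n}r)e^{in\theta}$ (up to the $\sqrt{2\pi}$ factor, absorbed in the $\sim$).

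Now Lemma \ref{lemm:l2} asserts that this $L^2$ norm is equivalent, as $n\to\infty$ in the regime $\epsilon\lambda_{k,n}\leq |n|<\lambda_{k,n}$, to $(1-\mu_{k,n})^{1/4}/\sqrt{n}$. Inverting, one gets immediately
\[
 c_{k,n} \;\sim\; \frac{\sqrt{n}}{(1-\mu_{k,n})^{1/4}},
\]
which is precisely the statement of the proposition. So the proposition is essentially a corollary of Lemma \ref{lemm:l2}, and the only thing left to check is that the $\sim$ in Lemma \ref{lemm:l2} is uniform in $n$ for $\mu_{k,n}$ in the admissible range, so that it may be inverted without losing uniformity.

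The main potential obstacle therefore lies entirely inside Lemma \ref{lemm:l2} itself: one must control the transition region where $r$ approaches the turning point $r=\mu_{k,n}$, using the Airy approximation $J_n(\lambda r)\sim n^{-1/3}\mathrm{Ai}(-n^{2/3}\rho(r))$ together with $\rho(r)\sim r-\mu$, and match with the standard oscillatory asymptotics of $J_n$ for $r$ bounded away from $\mu$. The integral $\int_0^1 |J_n(\lambda_{k,n}r)|^2 r\, dr$ then splits into an oscillatory contribution (dominant, of size $(1-\mu)^{1/2}/n$ via the identity $\int_{-X}^\infty \mathrm{Ai}^2$ asymptotics and the stationary-phase–type argument) and a negligible Airy-tail contribution. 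The hypothesis $1-\mu_{k,n}\gtrsim \lambda^{-2/3}$ (noted in the Remark after Lemma \ref{lemm:l2}) is what prevents the turning point from colliding with the boundary $r=1$, and hence what makes the equivalence uniform. Once that uniformity is secured, Proposition \ref{prop:norm} follows without any further work.
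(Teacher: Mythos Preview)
Your proposal is correct and follows exactly the paper's approach: the paper presents Proposition~\ref{prop:norm} with the words ``The following property follows'' immediately after Lemma~\ref{lemm:l2} and the Remark, i.e.\ as a direct corollary obtained by inverting the $L^2$ norm computed in the lemma. Your additional paragraph unpacking the proof of Lemma~\ref{lemm:l2} and the role of the bound $1-\mu_{k,n}\gtrsim\lambda^{-2/3}$ is accurate commentary on that lemma, but not needed for the Proposition itself.
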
 

\subsection{$K\cap \partial\D =\emptyset$}
 
\myindent Now, we can apply the same formalism than in the general case.
Indeed, the $L_\mu $ admit fold singularities with caustic sets $C_\mu = \{ r=\mu \}$. Moreover, the spacing of the zeroes of $J_n$ which are bounded below
asymptotically for $n$ large, ensure that, for each value of $n$, there is at most one value of $k$ such that $|\lambda_{k,n} - \lambda|\leq \lambda^{-1/3}$. As a consequence, the caustics appearing in the sum are spaced by at least $\lambda^{-1}$ one from another as was the case for the sphere.   

\subsection{$K\cap \partial \D \ne \emptyset$ }

\myindent The issue when $K$ can meet the boundary is that the constants $\mu_{k,n}$ become very close to $1$, hence the previous method doesn't work anymore. The idea to fix this is that we can actually prove that, the closer the caustics are to the boundary, the more they are spaced one from another.

\myindent We recall the set of parameters:
\begin{itemize}
\item $\nu_{k,n}=(\lambda_{k,n},n )\in Z $. We will assume that $x=(r,\theta)$ lies
near the boundary, i.e.  $x\in \D_c=\{ r\geq c>0 \}$
  and we can assume that $\nu_{k,n} $ belongs to some cone $Y:=\{ (u,v)~|~c_1u \leq
v < u \} $ with $c_1>0$: the other eigenfunctions are indeed uniformly bounded
  in $\D_c$.
\item The parameter $\mu =\mu_{k,n}= n/\lambda_{k,n} \in [c_1,1[ $ is associated to
a Lagrangian manifold $L_\mu $ defined by Equation
    (\ref{equ:lag-d}) and to the corresponding caustic set $C_\mu=\{ r=\mu \} $.
\end{itemize}

\myindent We have the
\begin{lemma} \label{lemm:zeroes}
  There holds, for $\nu_{k,n}\in Y $, 
  \[ \lambda_{k,n} =n+ nF\left( \frac{a_k}{n^{2/3}}\right) + O \left( \frac{1}{n}
\right) \]
  with $F$ and $ a_k $ defined in Appendix \ref{app:bessel} and
  $ \frac{a_k}{n^{2/3}}$ bounded.
\end{lemma}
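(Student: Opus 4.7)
The plan is to invoke Olver's uniform asymptotic expansion of $J_n(nz)$ in terms of Airy functions, valid for large order $n$ uniformly for $z$ in compact subsets of $(0,\infty)$. Introduce the Langer change of variable $\zeta = \zeta(z)$ defined for $z > 1$ by $\tfrac{2}{3}(-\zeta)^{3/2} = \sqrt{z^2-1} - \arccos(1/z)$, extended smoothly through the turning point $z=1$ (with $\zeta(1)=0$ and $\zeta'(1) \neq 0$). The expansion then reads
\[
    J_n(nz) = \left(\frac{4\zeta}{z^2-1}\right)^{\!1/4}\!\left[\frac{\mathrm{Ai}(n^{2/3}\zeta)}{n^{1/3}}\bigl(1+O(n^{-2})\bigr) + \frac{\mathrm{Ai}'(n^{2/3}\zeta)}{n^{5/3}}\cdot O(1)\right],
\]
uniformly in $z$ on compacts. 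The function $F$ from the appendix is precisely the smooth inverse of this Langer uniformization written in the form $z = 1 + F(\zeta)$, defined on a fixed neighborhood of $\zeta = 0$ extending to $\zeta < 0$.

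First I would observe that the hypothesis $\nu_{k,n} \in Y$ forces $\mu_{k,n} = n/\lambda_{k,n} \in [c_1,1)$, hence $z_k := \lambda_{k,n}/n = 1/\mu_{k,n}$ lies in the bounded interval $(1, 1/c_1]$, so $\zeta_k := \zeta(z_k)$ remains in a compact subset of $(-\infty,0]$. Combined with the zero-locating step below, this establishes the boundedness of $a_k/n^{2/3}$ asserted in the lemma.

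Next I would locate the zeros of $J_n$. On the relevant compact range of $z$, the prefactor $(4\zeta/(z^2-1))^{1/4}$ is smooth and nonvanishing (it extends through $z=1$ by its Langer-smooth definition), so the zeros of $J_n(nz)$ coincide with those of the bracketed Airy combination. Since the zeros $a_k$ of $\mathrm{Ai}$ are all simple ($\mathrm{Ai}'(a_k)\neq 0$), a standard implicit-function perturbation argument — treating the $\mathrm{Ai}'$ contribution and the $O(n^{-2})$ correction as perturbations of $\mathrm{Ai}(n^{2/3}\zeta)=0$ — yields $n^{2/3}\zeta_k = a_k + O(n^{-2})$, uniformly over $a_k/n^{2/3}$ in any fixed compact set. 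Inverting the Langer map via $F$ and Taylor-expanding, with $F'$ bounded on the compact range, one obtains
\[
    \lambda_{k,n} = n z_k = n + n F\!\left(\frac{a_k}{n^{2/3}}\right) + n \cdot O(n^{-8/3}) = n + n F\!\left(\frac{a_k}{n^{2/3}}\right) + O(n^{-5/3}),
\]
which gives the claimed $O(n^{-1})$ error (and in fact better).

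The only genuine obstacle, essentially classical, is to guarantee uniformity of all the estimates in the joint parameters $(k,n)$ across the full range where $a_k/n^{2/3}$ is bounded. Near $\zeta=0$, i.e.\ $\lambda_{k,n}$ very close to $n$, the naive WKB expansion away from the turning point breaks down; it is precisely the Langer uniformization built into the Olver expansion that handles this regime, and the uniformity then follows from the uniform-compact error bounds in Olver's theorem. No uniformity issue arises at the other end $\mu \to c_1$, where $\zeta_k$ stays in a fixed compact region away from $0$ and Olver's expansion is equivalent to the standard WKB oscillatory approximation.
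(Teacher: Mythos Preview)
Your approach is essentially the paper's: the asymptotic formula is Olver's result (the paper simply cites it via the appendix as $\lambda_{k,n}=np_0(a_k/n^{2/3})+O(1/n)$ with $F=p_0-1$, while you sketch the Langer uniformization and implicit-function argument behind it), and your boundedness argument for $a_k/n^{2/3}$ --- that $\nu_{k,n}\in Y$ pins $z_k=\lambda_{k,n}/n$ to the compact interval $(1,1/c_1]$, hence $\zeta_k$ to a compact set --- is exactly the paper's observation that $n/\lambda_{k,n}\geq c_1$ bounds $F(a_k/n^{2/3})$ and therefore its argument, since $F(t)\to+\infty$.

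One minor quantitative slip: the $\mathrm{Ai}'$ coefficient $B_0(\zeta)$ in Olver's expansion is generically nonzero, so the perturbation step actually gives $n^{2/3}\zeta_k=a_k+O(n^{-4/3})$, not $O(n^{-2})$. Propagating this through the inversion yields the final remainder $O(n^{-1})$, matching the lemma exactly; your parenthetical claim of $O(n^{-5/3})$ is too optimistic and should be dropped.
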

\myindent Indeed, we have
$n/\lambda _{k,n} =1/(1+ F\left( \frac{a_k}{n^{2/3}}\right)) \geq c_1 $. This
implies that the argument in $F$ is bounded because $F(t)\rightarrow +\infty $
as $t\rightarrow +\infty $. 
 
 \quad

\myindent Now, we can prove the following lemma.
\begin{lemma} \label{lemm:ineg}
  Let us consider, for $j=1,2$,  $\nu_j=(\lambda_{k_j,n_j},n_j) \in B_\lambda \cap Z
\cap Y$.
  Then we have, for $\lambda $ large enough,  $a_{k_1}\ne a_{k_2}$
  and, assuming $n_1 < n_2$:  
\[ n_1^{1/3} |a_{k_1} -a_{k_2} | \lesssim n_2-n_1 \]
\end{lemma}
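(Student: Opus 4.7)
The strategy is to translate the closeness condition $|\lambda_{k_j,n_j}-\lambda|\leq\lambda^{-1/3}$ on the two joint-spectrum points into an inequality on the pairs $(n_j,a_{k_j})$ via the asymptotic of Lemma \ref{lemm:zeroes}. Write $t_j:=a_{k_j}/n_j^{2/3}$; by Lemma \ref{lemm:zeroes} (and the fact that $\nu_j\in Y$ forces $n_j/\lambda_{k_j,n_j}$ to stay in a compact subinterval of $[c_1,1[$), the $t_j$ lie in a fixed compact set $\mathcal{T}$ on which $F$ is smooth, strictly increasing, with $F'$ bounded from above and below away from $0$. The main input is then the elementary identity
\[
\lambda_{k_2,n_2}-\lambda_{k_1,n_1}=(n_2-n_1)\bigl(1+F(t_2)\bigr)+n_1\bigl(F(t_2)-F(t_1)\bigr)+O(1/n_1),
\]
which follows directly from Lemma \ref{lemm:zeroes} applied to each $\lambda_{k_j,n_j}$.

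\textbf{Showing $a_{k_1}\neq a_{k_2}$.} If one had $k_1=k_2$, hence $a_{k_1}=a_{k_2}$, I would plug this into the identity above with $t_2-t_1=a_{k_1}(n_1^{-2/3}-n_2^{-2/3})$. Since $|a_{k_1}|=O(n_1^{2/3})$ and $n_1^{-2/3}-n_2^{-2/3}=O((n_2-n_1)/n_1^{5/3})$, the $n_1(F(t_2)-F(t_1))$ term becomes $O((n_2-n_1)/n_1^{1/3})=o(n_2-n_1)$. Thus the first term $(n_2-n_1)(1+F(t_2))\geq n_2-n_1\geq 1$ would dominate, contradicting $|\lambda_{k_2,n_2}-\lambda_{k_1,n_1}|\leq 2\lambda^{-1/3}\ll 1$ for $\lambda$ large. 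So $k_1\neq k_2$ and, since the $a_k$ are distinct (they are essentially minus the zeros of the Airy function, recalled in Appendix \ref{app:bessel}), $a_{k_1}\neq a_{k_2}$.

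\textbf{Proof of the inequality.} Using $\nu_1,\nu_2\in B_\lambda$ and the identity above, combined with $1+F(t_2)=O(1)$ on $\mathcal{T}$, I would first deduce
\[
n_1\,|F(t_1)-F(t_2)|\lesssim n_2-n_1+\lambda^{-1/3}\lesssim n_2-n_1,
\]
where the last step uses $n_2-n_1\geq 1$. Since $F'$ is bounded below on $\mathcal{T}$, this upgrades to $n_1|t_1-t_2|\lesssim n_2-n_1$. Finally, I would expand
\[
n_1(t_1-t_2)=n_1^{1/3}(a_{k_1}-a_{k_2})+a_{k_2}\,\frac{n_1^{1/3}(n_2^{2/3}-n_1^{2/3})}{n_2^{2/3}},
\]
and, using once more $|a_{k_2}|=O(n_2^{2/3})$ together with $n_2^{2/3}-n_1^{2/3}\lesssim(n_2-n_1)/n_1^{1/3}$, the correction term is $O(n_2-n_1)$. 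Rearranging gives exactly $n_1^{1/3}|a_{k_1}-a_{k_2}|\lesssim n_2-n_1$.

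\textbf{Main obstacle.} The only non-cosmetic issue is verifying that $F'$ is bounded away from $0$ on the compact set $\mathcal{T}$ determined by the cone $Y$; this should be routine from the explicit formula for $F$ recalled in Appendix \ref{app:bessel} (where $F$ essentially encodes the transition-region asymptotics of Bessel zeros), but it is the step that makes the reduction from $|F(t_1)-F(t_2)|$ to $|t_1-t_2|$ work. Once this is in hand, every other step is algebraic bookkeeping on the asymptotic of Lemma \ref{lemm:zeroes}.
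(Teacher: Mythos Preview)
Your approach is essentially the paper's: rewrite $\lambda_{k_j,n_j}$ via Lemma \ref{lemm:zeroes}, use the $B_\lambda$-closeness to bound $n_1|F(t_1)-F(t_2)|$ by $n_2-n_1$, convert this to $n_1|t_1-t_2|$ using $F'\geq 1$, and then split off the $a_{k_1}-a_{k_2}$ contribution. Your treatment of the main inequality is correct and matches the paper (you use $n_1$ where the paper uses $n_2$, but $n_1\sim n_2$ makes this cosmetic). Your identified ``main obstacle'' is a non-issue: Appendix \ref{app:bessel} gives $p_0'\geq 1$, i.e.\ $F'\geq 1$ everywhere.

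There is, however, a genuine slip in your argument for $a_{k_1}\neq a_{k_2}$. From $|a_{k_1}|=O(n_1^{2/3})$ and $|n_1^{-2/3}-n_2^{-2/3}|=O((n_2-n_1)n_1^{-5/3})$ you only get $|t_1-t_2|=O((n_2-n_1)/n_1)$, hence $n_1|F(t_2)-F(t_1)|=O(n_2-n_1)$, \emph{not} $O((n_2-n_1)/n_1^{1/3})$ as you claim. Since this correction has the opposite sign to $(n_2-n_1)(1+F(t_2))$ when $a_{k_1}=a_{k_2}$, a bound that is merely $O(n_2-n_1)$ does not rule out cancellation. The paper closes this gap differently: it observes that for fixed $a$ the function $G_a(x)=x\bigl(1+F(a/x^{2/3})\bigr)$ satisfies $G_a'\geq 1$, so $n_1\neq n_2$ forces $|\lambda_{k_1,n_1}-\lambda_{k_2,n_2}|\geq |n_1-n_2|\geq 1$, contradicting membership in $B_\lambda$. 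You should replace your $o(n_2-n_1)$ claim by this monotonicity argument.
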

\begin{proof}
    The first assertion follows from the fact that if $G_a$ is defined by $G_a(x)=x+
xF(a/x^{2/3}) $, $G'_a \geq 1 $. 
    We have then
    \[ \left( n_1+n_1F\left( \frac{a_{k_1}}{n_1^{2/3}}\right)\right) -\left(
n_2+n_2F\left( \frac{a_{k_2}}{n_2^{2/3}}\right)\right)= O\left(
    \lambda^{-1/3 }\right) \]
    \myindent Let us denote $F_j=F\left( \frac{a_{k_j}}{n_j^{2/3}}\right)$. We first get
    \begin{equation}\label{intermediaterandomeq}
    n_2 |F_2 -F_1| \lesssim n_2-n_1.
    \end{equation}
    \myindent Then, using the fact that $F'\geq 1$, 
    \[ |F_2-F_1| \geq  \left|\frac{a_{k_1}}{n_1^{2/3}} -\frac{a_{k_2}}{n_2^{2/3}}  
\right| \]
    \myindent We rewrite
    \begin{equation}\label{randomeq}
     \frac{a_{k_1}}{n_1^{2/3}} -\frac{a_{k_2}}{n_2^{2/3}}=
\frac{a_{k_1}-a_{k_2}}{n_1^{2/3}}+
a_{k_2}\left(\frac{1}{n_1^{2/3}}-\frac{1}{n_2^{2/3}}  \right) 
\end{equation}
    \myindent We remark finally, using the fact that $n_1\sim n_2$,  that
    \[ n_2\left| a_{k_2}\left(\frac{1}{n_1^{2/3}}-\frac{1}{n_2^{2/3}}\right) \right|
\lesssim n_2\left( |a_{k_2}| n_1^{-5/3}  (n_2-n_1)\right) \lesssim n_2^{-\frac{2}{3}}|a_{k_2}| | n_2- n_1|,~~~ 
\]
  which is very small compared to $(n_2 - n_1)$ if we are close enough to the boundary. Hence we may put the second term of the right-hand side of \eqref{randomeq} to the right-hand side of \eqref{intermediaterandomeq}, which yields the result. 
\end{proof}

\myindent Now, since the distance between two caustics is given by
\begin{equation}
    \frac{n_1}{\lambda_{k_1,n_1}} - \frac{n_2}{\lambda_{k_2,n_2}} = \frac{n_1 - n_2}{\lambda} + O(\lambda^{-\frac{4}{3}}),
\end{equation}
this lemma yields an improvement on the spacing between caustics. Indeed, find $\mu = 1 - \eps$, for some $\eps > \lambda^{-\frac{2}{3}}$, and write it as $\eps = \lambda^{-\frac{2}{3}} k^{\frac{2}{3}}$ for some $k \geq 1$. Then, the lemma proves that, in an interval say $[1 -\frac{1}{2}\eps, 1 - 2\eps]$, the caustics are spaced by $\lambda^{-1} \eps^{-\frac{1}{2}}$.

\quad

\myindent Now, let us label $\nu_j =(\lambda_j, n_j ) , ~j=1,\cdots , k $, $k = O(\lambda^{\frac{2}{3}})$, the  set $Z
\cap Y \cap B_\lambda $ with $n_k< \cdots <n_1 < \lambda $. We write $$\mu_j = n_j/\lambda  $$. Then, we need to bound, for $\in [c,1[$, the sum
\[ S_{\lambda} := \sum_{j=1}^k |\phi_{j}(x)|^2. \]

\myindent Now, for any $j$ such that $r > \mu_j$, there holds from the BKW decay
\begin{equation}
    |\phi_j(r)|^2 \lesssim (1 - \mu_j)^{-\frac{1}{2}} \lambda^{\frac{1}{3}} <|r - \mu_j|\lambda^{\frac{2}{3}} >^{-\frac{1}{2}},
\end{equation}
while, for any $N\geq 1$, and for any $j$ such that $r < \mu_j$, the exponential decay of the Airy function on the right half-line ensures that
\begin{equation}
    |\phi_j(r)|^2 \lesssim (1 - \mu_j)^{-\frac{1}{2}} \lambda^{\frac{1}{3}} <|r - \mu_j|\lambda^{\frac{2}{3}} >^{-N}.
\end{equation}

\myindent Hence, introducing a cutoff between those two cases, it is natural to decompose the set into three parts. Writing $r = 1 - \eta$, we introduce, for some $0< \alpha << 1$,
\begin{equation}
\begin{split}
    &A:= \{j \ \text{such that}\ \mu_j -r > \lambda^{-\frac{2}{3} + \alpha} \}\\
    &B :=\{j \ \text{such that} \ -\lambda^{-\frac{2}{3} + \alpha} <r-\mu_j < \max(2\eta, \lambda^{-\frac{2}{3}+\alpha})\}\\
    &C:= \{j \ \text{such that}\ r-\mu_j > \max(2\eta, \lambda^{-\frac{2}{3}+\alpha})\}.
\end{split}
\end{equation}

\quad

\myindent First, for the sum on $A$, we are in the regime of exponential decay, so we may write, for any $N \geq 1$,
\[\begin{split}
    \sum_A |\phi_j(r)|^2 &\leq \sum_A (1 - \mu_j)^{-\frac{1}{2}} \lambda^{\frac{1}{3}} <|r - \mu_j|\lambda^{\frac{2}{3}}>^{-N} \\
    &\leq |A| (\lambda^{-\frac{2}{3}})^{-\frac{1}{2}} \lambda^{\frac{1}{3}} \lambda^{-N \alpha} \\
    &= O(\lambda^{-\infty}),
\end{split}\]
since we may choose $N$ arbitrarily large (observe that $\alpha$ is fixed before $N$).

\myindent With regards to the sum on $B$, the point is that we don't have decay coming from the Airy functions. However, we expect the set $B$ itself to be relatively small. Assume first that $\eta \lesssim \lambda^{-\frac{2}{3}+\alpha}$. Then, for the points in $B$, the caustics are spaced by at least $\lambda^{-\frac{2}{3}}$, so we easily get that the cardinal of $B$ is bounded by $\lambda^{\alpha}$. Hence, overall, there holds
\[\sum_B |\phi_j(r)|^2 \lesssim \lambda^{\alpha} (\lambda^{-\frac{2}{3}})^{-\frac{1}{2}} \lambda^{\frac{1}{3}} = \lambda^{\frac{2}{3} + \alpha}.\]

\myindent Assume now that $\eta  \gtrsim \lambda^{-\frac{2}{3}+\alpha}$. Observe that we need only bound the sum on those $j$ such that $\mu_j \in [r - \lambda^{-\frac{2}{3} + \alpha}, r - 2\eta]$. Then, for the points in $B$, the caustics are spaced at least by $\lambda^{-1}\eta^{-\frac{1}{2}}$. In particular, the cardinal of $B$ is bounded by $\lambda \eta^{\frac{3}{2}}$, and by $O(\lambda^{\frac{2}{3}})$. Hence, we may bound
\[
\begin{split}
    \sum_{\mu_j \in [r - \lambda^{-\frac{2}{3} + \alpha}, r - 2\eta]} |\phi_j(r)|^2 &\lesssim \sum_{l =1}^{\min(\lambda \eta^{\frac{3}{2}},O(\lambda^{\frac{2}{3}}))} \eta^{-\frac{1}{2}} \lambda^{\frac{1}{3}} < l \lambda^{-1} \eta^{-\frac{1}{2}} \lambda^{\frac{2}{3}}>^{-\frac{1}{2}} \\
    &\lesssim \lambda^{\frac{1}{2}} \eta^{-\frac{1}{4}} \sum_{l = 1}^{\min(\lambda \eta^{\frac{3}{2}},O(\lambda^{\frac{2}{3}}))} l^{-\frac{1}{2}} \\
    &\lesssim \eta^{-\frac{1}{4}} \lambda^{\frac{1}{2}} \min(\lambda^{\frac{1}{2}} \eta^{\frac{3}{4}}, \lambda^{\frac{1}{3}})\\
    &\lesssim \min(\lambda \eta^{\frac{1}{2}}, \lambda^{\frac{5}{6}} \eta^{-\frac{1}{4}})\\
    &\lesssim \lambda^{\frac{8}{9}}.
\end{split}\]

\myindent Finally, for the sum on $C$, observe that $|r - \mu_j| \simeq 1 - \mu_j$, so we may bound it by
\[\sum_{j \in C} (1-\mu_j)^{-1}\]

\myindent Now, since, using the asymptotics of the zeroes of the Airy function, there holds
\begin{equation}
    \mu_j \simeq \lambda^{-\frac{2}{3}} k_j^{\frac{2}{3}},
\end{equation}
thus
\[\sum_{j \in C} (1-\mu_j)^{-1} \lesssim \lambda^{\frac{2}{3}} \sum_{l=1}^{O(\lambda^{\frac{2}{3}})} l^{-\frac{2}{3}} \lesssim \lambda^{\frac{8}{9}},\]
which concludes the proof.


\vfil \eject

 {\bf \huge Appendices} 

 \appendix
   \section{Airy function}\label{sec:airy}
   \myindent The Airy function is defined by the following integral formula:
   \[ {\rm Ai}(x):= \frac{1}{\pi} \int _\R e^{i\left( \frac{t^3}{3} + tx \right)}dt \]
   \myindent The Airy function is smooth and 
   satisfies the following asymptotics:
   if $x>>1$, 
   $ {\rm Ai}(x) =O \left(x^{-\infty} \right)$ and similarly for ${\rm Ai}'$; 
   if $x<<-1$,
   \[ {\rm Ai}(x)\sim  \frac{1}{\sqrt{\pi x^{1/4}}} \cos \left( \frac{2}{3}x^{3/2} \right)=O \left(x^{-1/4}\right)\]
     while
     \[ {\rm Ai}'(x)=O \left(x^{1/4}\right)~.\]

   \section{Lagrangian oscillatory integrals}\label{app:lagr}

   \myindent Let $L\subset T^\star M $ be Lagrangian manifold. It is known  from the work of H\"ormander, that $L$ can be locally described using a
   phase function $S(x,\theta )$ with $x\in  M$ and $\theta \in \R^N $:
   under some non degeneracy assumption on $S$, we have
   \[ L:=\{ (x,d_xS)~|~d_\theta S=0 \} \]
   and the oscillatory integrals associated to $L$ of order $0$ are defined by
   \[ u(x,h)=\frac{1}{(2\pi h)^{N/2} }\int_{\R^N}e^{iS(x,\theta ) /h} a(x,\theta ) d\theta \]
        where $a$ is a symbol of degree $0$ in $h$ compactly supported in $x$.

        \myindent Outside the caustic point, $u$ is an ordinary WKB function:
        \[ u(x) =e^{iS_1(x)/h}a_1(x,h) \]
        where $a$ is a symbol of degree $0$ in $h$ compactly supported in $x$ and $L$ is the graph of $dS_1$. 

        \section{Zeroes of $J_n$}\label{app:bessel}

        \myindent In the paper \cite{Ol54}, the following uniform asymptotic expansion for the $k$th zeroes of $J_n$
        is
        \[ \lambda_{k,n}= n p_0 \left( \frac{a_k}{n^{2/3}} \right) +O(1/n) \]
        with $p_0\in C^\infty (\R^+, \R^+)$, a smooth function, and $a_k $ the $k$th zero of Ai.
        The function $p_0$ satisfies
        $p_0(0)=1$, $\forall t\geq 0,~ p'(t)\geq 1 $.

\end{document}